 \newtheorem{thm}{Theorem}[section]
 \newtheorem{cor}[thm]{Corollary}
 \newtheorem{lem}[thm]{Lemma}
 \theoremstyle{definition}
 \newtheorem{defn}[thm]{Definition}
 \theoremstyle{remark}
 \newtheorem{rem}[thm]{Remark}
 \numberwithin{equation}{section}
\begin{document}

\title[]
{No breather theorems for the mean curvature flow}

\author{Liang Cheng and Yongjia Zhang}

\dedicatory{}
\date{}


\keywords{}

\thanks{Liang Cheng's Research partially supported by Natural Science Foundation of Hubei 2019CFB511
}

\address{School of Mathematics and Statistics $\&$ Hubei Key Laboratory of Mathematical Sciences, Central  China Normal University, Wuhan, 430079, P.R.China
}

\email{chengliang@mail.ccnu.edu.cn}

\address{ School of Mathematics, University of Minnesota, Twin Cities, MN, 55414, USA}
\email{ zhan7298@umn.edu}

\maketitle

\begin{abstract}
 In this article we study the breathers of the mean curvature flow in the Euclidean space. A breather is a solution to the mean curvature flow which repeats itself up to isometry and scaling once in a while. We prove several no breather theorems in the noncompact category, that is, under certain conditions, a breather of the mean curvature flow must be a solitonic solution (self-shrinker, self-expander, or translator).
\end{abstract}

\section{Introduction}

A breather of the mean curvature flow is a solution which is self-similar at two different times. Precisely, it is defined as follows.
\begin{defn}\label{def_breather}
Let $x:M^n\times I\to \mathbb{R}^{n+m}$ be a smooth and immersed solution to the mean curvature flow. If there exist two time instances $t_1,t_2\in I$, $t_1<t_2$, a constant $\alpha>0$, an isometry $F:\mathbb{R}^{n+m}\to \mathbb{R}^{n+m}$, and a self-diffeomorphism $\phi: M^n\rightarrow M^n$, such that
\begin{eqnarray}\label{eq:definitionofbreather}
 x(p,t_2)=\alpha F\circ x(\phi(p),t_1)\quad\text{ for all }\quad p\in M,
\end{eqnarray}
then $x$ is called a  breather. If $\alpha=1$, $\alpha<1$,  or $\alpha>1$, then the  breather is called  steady, shrinking, or expanding, respectively.
\end{defn}

Clearly, the \emph{solitonic solutions} (by this term we specifically mean the \emph{self-shrinkers}, the \emph{self-expanders}, and the \emph{translators}) can be regarded as the most trivial examples of the breather. Our goal in this article is to prove some no breather theorems for the mean curvature flow, that is,  under certain conditions, the breathers must be these trivial ones---the solitonic solutions. 

The no breather theorems were first proved by Perelman \cite{P1} for the Ricci flow. He proved that a closed breather of the Ricci flow must be a gradient Ricci soliton; these results are the first applications of his entropy formula. Later developments have generalized the no breather theorems to the noncompact Ricci flow under diverse conditions; see, for instance, \cite{CZhang}, \cite{CZhang1}, \cite{LZ}, \cite{RV}, \cite{Zhang1}, \cite{Zh}. Among them the strongest one for the shrinking case is proved by the authors \cite{CZhang}, where they have shown that a shrinking breather with Ricci curvature bounded from below must be a shrinking gradient Ricci soliton. The result in \cite{CZhang} is recently generalized to the harmonic Ricci flow in \cite{CC}.
Since the mean curvature flow is also a parabolic geometric evolution equation bearing many similar properties as the Ricci flow, it is natural to ask whether the same results also hold for the mean curvature flow. One may immediately think that Huisken's monotonicity formula \cite{Hu} would have some implications, and this is indeed how Magni-Mantegazza \cite[Theorem 2.3]{MM} proved that a closed shrinking breather of the mean curvature flow must be a self-shrinker. Applying a method similar to that of Perelman, they considered the supremum of Huisken's entropy, which is always attained in the closed case; see \cite{MM} or Remark \ref{remarkmm} for a brief discussion of this method. In the closed case,  Magni-Mantegazza also proved the no steady and expanding breather theorems for the closed mean curvature flow. Their argument goes as follows. A closed mean curvature flow (with any codimension) always develops finite-time singularity. But a steady or expanding breather gives rise to an immortal solution; see section 4 and section 5 below. Hence closed steady or expanding breather does not exist. However, Magni-Mantegazza's methods can not be applied to the noncompact case.  In this paper, we will prove some no steady and expanding breather theorems in the noncompact case.

We remark that our study looks more interesting in light of the examples of general self-similar mean curvature flows. Recall that a self-similar
solution is a mean curvature flow $x:M^n\times I\to\mathbb{R}^{n+m}$ satisfying
\begin{eqnarray*}\label{eq:definitionofself-similarsolution}
	x(M,t)=\alpha(t) F(t)\circ x(M)\quad \text{ for all }\quad t\in I,
\end{eqnarray*}
where $\alpha(t)$ is a positive function  with $\alpha(0)=1$
and $F(t)$ is a one-parameter family of isometries in $\mathbb{R}^{n+m}$ with $F(0)=\operatorname{id}$.  Note that except for the  solitonic solutions, there are other self-similar solutions such as rotators and those of mixed types (they rotate while at the same time shrink/expand). The self-similar solutions to the curve shortening flow in $\mathbb{R}^{2}$ are classified by Halldorsson \cite{H}. These examples show that one can not hope the no breather theorem to hold without any assumption; we shall include them as counterexamples in the remarks after our main theorems.

As our first main result, we state the no shrinking breather theorem, since, among all of our results, its conclusion is the most strong. \emph{In the statements of Theorem \ref{main}---\ref{main5}, the time instances $t_1$ and $t_2$ correspond to the ones in Definition \ref{def_breather}.}

\begin{thm}\label{main}
Let $x:M^n\times [t_1,t_2]\to \mathbb{R}^{n+m}$, be a  complete immersed shrinking breather of the mean curvature flow. Let $\alpha\in(0,1)$ be the constant in (\ref{eq:definitionofbreather}). Furthermore, assume
\begin{equation}\label{integral_bound}
\int_{M} e^{-\gamma|x(\cdot,t_1)|^2}d\mu_{t_1}<\infty,
\end{equation}
where $\gamma$ is an arbitrary constant satisfying $\gamma<\tfrac{1}{4(t_2-t_1)}(1-\alpha^2)$ and $\mu_t$, where $t\in[t_1,t_2]$, is the Riemannian measure induced by the immersion $x(\cdot,t):M^n\rightarrow \mathbb{R}^{n+m}$. Then $x$ must be a self-shrinker. 
\end{thm}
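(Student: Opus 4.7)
The plan is to apply Huisken's monotonicity formula \cite{Hu} with a carefully chosen center $x_0$ and backward singular time $T$, tailored to the breather parameters, so that the breather relation (\ref{eq:definitionofbreather}) forces the monotone quantity to take the same value at $t_1$ and $t_2$; the equality case of the monotonicity will then deliver the self-shrinker equation.

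Since $F$ is a Euclidean isometry and $\alpha<1$, the affine map $y\mapsto \alpha F(y)$ is a strict $\alpha$-contraction of $\mathbb{R}^{n+m}$, hence has a unique fixed point $x_0\in\mathbb{R}^{n+m}$. Writing $F(y)=Ay+b$ with $A\in O(n+m)$ and recentering at $x_0$, the breather relation (\ref{eq:definitionofbreather}) becomes
\begin{equation*}
x(p,t_2)-x_0=\alpha A\bigl(x(\phi(p),t_1)-x_0\bigr),
\end{equation*}
which is a pure scaling plus rotation from the standpoint of Euclidean norms. I then choose $T:=(t_2-\alpha^2 t_1)/(1-\alpha^2)$, so that $T-t_2=\alpha^2(T-t_1)$ and $T-t_1=(t_2-t_1)/(1-\alpha^2)$. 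Note that $1/(4(T-t_1))=(1-\alpha^2)/(4(t_2-t_1))$ is exactly the threshold appearing in (\ref{integral_bound}), which is precisely what is needed to ensure finiteness at $t=t_1$ of Huisken's quantity
\begin{equation*}
\Phi(t):=\int_M \frac{1}{(4\pi(T-t))^{n/2}}\exp\!\bigg(-\frac{|x(p,t)-x_0|^2}{4(T-t)}\bigg)\,d\mu_t(p).
\end{equation*}

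Next, I will use the breather relation to show $\Phi(t_1)=\Phi(t_2)$. Since $A$ is orthogonal and $T-t_2=\alpha^2(T-t_1)$, the exponent transforms as $|x(p,t_2)-x_0|^2/(4(T-t_2))=|x(\phi(p),t_1)-x_0|^2/(4(T-t_1))$. The induced metric scales as $g_{t_2}=\alpha^2\phi^* g_{t_1}$, so under the substitution $q=\phi(p)$ the volume element contributes a factor $\alpha^n$, which cancels the prefactor ratio $(4\pi(T-t_2))^{n/2}/(4\pi(T-t_1))^{n/2}=\alpha^n$; the integral defining $\Phi(t_2)$ thus reduces to $\Phi(t_1)$. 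Once the monotonicity formula
\begin{equation*}
\frac{d}{dt}\Phi(t)=-\int_M\bigg|H+\frac{(x-x_0)^\perp}{2(T-t)}\bigg|^2\frac{\exp\!\bigl(-|x-x_0|^2/(4(T-t))\bigr)}{(4\pi(T-t))^{n/2}}\,d\mu_t
\end{equation*}
is available on $[t_1,t_2]$, the equality $\Phi(t_1)=\Phi(t_2)$ forces $H+(x-x_0)^\perp/(2(T-t))\equiv 0$ throughout, which is exactly the self-shrinker equation centered at $(x_0,T)$.

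The main obstacle I anticipate is the analytic justification of Huisken's monotonicity formula in the complete noncompact setting. Although (\ref{integral_bound}) bounds $\Phi(t_1)$, in order to differentiate $\Phi$ and to discard the boundary terms in the integration-by-parts derivation of the monotonicity formula one needs finiteness of the integrand and absence of boundary contributions at spatial infinity throughout $[t_1,t_2]$. I would expect to handle this by a cutoff/exhaustion argument using extrinsic balls in $\mathbb{R}^{n+m}$, exploiting the Gaussian weight to absorb at-most-polynomial growth on such regions; the breather relation transports the integrability hypothesis from $t_1$ directly to $t_2$, and intermediate $t$ can be dealt with by short-time persistence of Gaussian integrability along the flow. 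With these analytic ingredients in hand, the rigidity step that recovers the self-shrinker equation from equality is standard.
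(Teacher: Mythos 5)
Your argument is correct in substance, but it follows a genuinely different route from the paper. The paper splices rescaled, rotated and translated copies of the breather head-to-tail into an ancient solution (Lemma \ref{TypeIancientsolution}), proves a uniform entropy bound for that ancient solution (Lemmas \ref{firstthereisabound} and \ref{integralalwaysbound}), and then extracts a blow-down limit which, by a Fatou argument, is a critical point of Huisken's functional; since each rescaled flow differs from the original breather only by controlled similarities, the shrinker equation is transferred back to $x$. You instead run a single application of Huisken's monotonicity on $[t_1,t_2]$, centered at the unique fixed point $x_0$ of the strict contraction $y\mapsto\alpha F(y)$ with singular time $T=(t_2-\alpha^2t_1)/(1-\alpha^2)$, so that the breather identity gives $x(p,t_2)-x_0=\alpha A(x(\phi(p),t_1)-x_0)$, $g_{t_2}=\alpha^2\phi^*g_{t_1}$, and hence the exact equality $\Phi(t_1)=\Phi(t_2)$ of the Gaussian densities (your cancellation of the factor $\alpha^n$ is correct); rigidity in the monotonicity then yields $\vec H+\frac{(x-x_0)^\perp}{2(T-t)}\equiv 0$ directly on the original flow. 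This is essentially the Magni--Mantegazza scheme of Remark \ref{remarkmm}, with your fixed-point choice of center replacing the supremum over centers that required compactness; it buys a shorter, limit-free proof, while the paper's splicing construction is the template it reuses for the steady and expanding cases. Your use of the strict inequality $\gamma<\tfrac{1-\alpha^2}{4(t_2-t_1)}$ to absorb the recentering at $x_0$ is exactly the role this hypothesis plays in the paper (compare Remark \ref{main_rem}(3)).

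One caveat: the analytic ingredient you defer, the validity of the monotonicity formula for a complete (not necessarily properly) immersed noncompact flow assuming only Gaussian integrability at the initial time, is more delicate than your sketch suggests, since cutoffs by extrinsic balls need not have compact support when the immersion is not proper, so the boundary terms in an integration-by-parts derivation are not easily discarded. This is precisely why the paper's Theorem \ref{Huiskenmonotonicity} avoids integration by parts altogether via the drifted reparametrization (\ref{drifting_mcf}) and Tonelli's theorem; invoking that theorem with center $(x_0,T)$ closes your argument completely, whereas the extrinsic-ball exhaustion you propose would need additional hypotheses or a separate justification.
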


\begin{rem}\label{main_rem}
\begin{enumerate}
\item There exist self-similar solutions to the curve shortening flow which rotate and shrink at the same time (see \cite[Figures 12---19]{H}). None of these examples satisfies (\ref{integral_bound}), since they all wind infinitely many times near a circle with finite radius. This shows that the condition (\ref{integral_bound}) can not be removed.

\item Theorem \ref{main} can also be proved for the \emph{Brakke flow breathers}. The details are left to the readers; note that the condition (\ref{integral_bound}) is necessary. A Brakke flow breather should be defined as follows. Let $\mu_t$, where $t\in I$, be an $n$-dimensional Brakke flow in $\mathbb{R}^{n+m}$. Then, $\mu_t$ is a Brakke flow breather, if there exists $t_1<t_2$, $\alpha>0$, and isometry $F:\mathbb{R}^{n+m}\to\mathbb{R}^{n+m}$ as in Definition \ref{def_breather}, such that
$$\mu_{t_2}=\alpha\cdot F_{\#}\mu_{t_1},$$
where $F_{\#}\mu$ stands for the push-forward of $\mu$ by $F$.

\item If there is no translation part in the isometry $F$ in (\ref{eq:definitionofbreather}), that is, if $F$ can be written as $F(x)=\mathcal{R} x$ for some $\mathcal{R}\in O(n+m)$, then Theorem \ref{main} also holds if the constant $\gamma=\tfrac{1}{4(t_2-t_1)}(1-\alpha^2)$; one may check this fact with the proof of Lemma \ref{firstthereisabound} and Lemma \ref{integralalwaysbound}.
\end{enumerate}
\end{rem}

Next, we state the no steady and no expanding breather theorems. The examples in \cite{H} indicate that (\ref{integral_bound}) is not sufficient for a no steady or expanding breather theorem. The examples in \cite[Figures 8---9]{H} are curve shortening flows evolving by rotation, and the examples in \cite[Figures 10---11]{H} evolve by rotation along with expansion. The reader can easily check that all these examples satisfy (\ref{integral_bound}). Hence, in the noncompact case, the no steady or expanding breather theorems are not expected to hold unless under strong conditions.

\begin{thm}\label{main3}
	Let $x_0:M^n \times[t_1,t_2]\to \mathbb{R}^{n+1}$ be a complete, noncompact, and weakly convex expanding breather of the mean curvature flow with bounded second fundamental form. Then $x_0$ is a self-expander.
\end{thm}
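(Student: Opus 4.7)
The plan is to use Hamilton's differential Harnack inequality for weakly convex mean curvature flow, combined with the periodic self-similar structure forced by the expanding breather identity, to derive equality in the Harnack---whereupon the rigidity case identifies the flow as a self-expander. The first step is to extend the breather to an immortal solution by iterating $x(p,t_2)=\alpha F\circ x(\phi(p),t_1)$. Since $\alpha>1$, setting $T_0:=t_1-(t_2-t_1)/(\alpha^2-1)$ and $s_k:=T_0+\alpha^{2k}(t_1-T_0)$, the parabolic scaling of MCF produces a solution on $(T_0,\infty)$ whose surface at time $s_k$ is an $\alpha^k$-scaled isometric image of $x(\phi^k(\cdot),t_1)$; weak convexity and bounded $|A|$ are preserved on time intervals bounded away from $T_0$.

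Next, invoke Hamilton's differential Harnack inequality for weakly convex mean curvature flow in codimension one, extended to the complete noncompact setting under the bounded second fundamental form assumption, to obtain
\begin{equation*}
\Omega(p,t):=\partial_t H(p,t)+\frac{H(p,t)}{2(t-T_0)}\ge 0
\end{equation*}
on the extended flow. Consequently $U(p,t):=\sqrt{t-T_0}\,H(p,t)$ is nondecreasing in $t$ along every Lagrangian trajectory $p\in M$. The breather identity yields $H(p,t_2)=\alpha^{-1}H(\phi(p),t_1)$ and $t_2-T_0=\alpha^2(t_1-T_0)$, whence $U(p,s_k)=U(\phi^k(p),t_1)$ for each $k\ge 0$. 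Since $|A|\le C$ gives $\sup_M U(\cdot,t_1)<\infty$, the monotone sequence $k\mapsto U(\phi^k(p),t_1)$ converges for every $p$, and summing the increments produces
\begin{equation*}
\int_{t_1}^{\infty}\sqrt{t-T_0}\,\Omega(p,t)\,dt \;\le\; \sup_M U(\cdot,t_1)-U(p,t_1)\;<\;\infty.
\end{equation*}

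The hardest step is then to upgrade this integrated vanishing to the pointwise identity $\Omega\equiv 0$ on $M\times(T_0,\infty)$. A convenient device is to pass to the normalized flow $\tilde x(p,\tau):=e^{-\tau/2}x(p,T_0+e^\tau)$, under which the expanding breather becomes a steady breather of period $2\log\alpha$ modulo the isometry $F$ and the reparametrization $\phi$. In these variables $\tilde H=U$ is nondecreasing in $\tau$ along trajectories, uniformly bounded, and periodically matched via $\phi$; the strong maximum principle applied to the linear parabolic inequality satisfied by the Harnack quantity $\Omega$ then forces $\Omega\equiv 0$. Finally, the rigidity of Hamilton's Harnack inequality for MCF (the equality case) characterizes the flow as a self-expander, completing the proof. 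The bounded $|A|$ hypothesis is essential both for extending Hamilton's Harnack to the complete noncompact regime and for the compactness needed in the strong-maximum-principle step.
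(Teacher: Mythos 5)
Your setup (splicing the breather into an immortal solution, invoking Hamilton's Harnack for complete weakly convex flows with bounded second fundamental form, and observing that $U(p,t):=\sqrt{t-T_0}\,H(p,t)$ is nondecreasing along each trajectory, bounded because the breather identity gives $U(p,s_k)=U(\phi^k(p),t_1)\le\sup_M U(\cdot,t_1)<\infty$) coincides with the paper's strategy. The genuine gap is the step where you pass from the finiteness of $\int_{t_1}^{\infty}\sqrt{t-T_0}\,\Omega(p,t)\,dt$ to $\Omega\equiv 0$. Finiteness of this integral (equivalently, monotone boundedness of $U(p,\cdot)$) is not an ``integrated vanishing'': it does not force $\Omega$ to vanish at any point of $M\times(T_0,\infty)$, only to decay in an averaged sense as $t\to\infty$. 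The strong maximum principle can propagate an interior zero of the Harnack quantity backward in time, but you never produce such a zero: the limit $\lim_{k}U(\phi^k(p),t_1)$ need not be attained at any point of $M$, since the points $\phi^k(p)$ may escape to spatial infinity, and the normalized flow is only asymptotically (not exactly) periodic unless one first recenters at the fixed point of the affine map $x\mapsto\alpha\mathcal{R}x+V$. As written, ``periodic matching plus strong maximum principle'' does not close the argument.

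What the paper does at this juncture, and what your sketch is missing, is a rescaling-limit argument that converts monotone boundedness into an actual zero \emph{on a flow congruent to the original breather}. One rescales the immortal solution by $t_j^{-1/2}$ at the breather times $t_j$; each rescaled flow $\tilde x_j$ is exactly a rotation, a controlled dilation, and a controlled translation of $x_0$ itself, so by the compactness theorem \cite[Theorem 11.12]{ACGL} (local area bounds and properness coming from convexity) a subsequence converges to $\tilde x_\infty(p,t)=c_0^{-1/2}\mathcal{R}_\infty x_0\big(p,c_0(t-1)\big)+V_\infty$. The bound $|H|\le C/\sqrt{t}$ gives $|x(p_0,t)-x(p_0,1)|\le C\sqrt{t_j}$, which keeps the base point in a bounded region after rescaling and yields a limit point $p_\infty\in M$; then the convergence of the monotone bounded quantity $\sqrt{t}H(p_0,t)$ gives $\partial_t\big(\sqrt{t}\,\tilde H_\infty(p_\infty,t)\big)=0$ for $t\in(1,\alpha^2]$. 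After splitting off a Euclidean factor to reduce to the strictly convex case, taking $V=-h^{-1}\nabla H$ in the Harnack gives $\nabla\tilde H_\infty(p_\infty,\cdot)=0$, and the rigidity statement \cite[Theorem A.2]{C} identifies $\tilde x_\infty$, hence $x_0$ (which differs from it only by a rigid motion and a fixed dilation), as a self-expander. If you want to salvage your outline, you must insert this blow-down/compactness step (or an equivalent mechanism producing a point where the Harnack quantity vanishes) and explain why the limit is congruent to the original breather; without it the conclusion does not follow.
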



\begin{thm}\label{main5}
	Let $x_0:M^n\times[t_1,t_2]\to \mathbb{R}^{n+1}$ be a complete, noncompact, and weakly convex steady breather of the mean curvature flow with bounded second fundamental form.  Moreover, assume \textbf{either} one of the following is true.
	\begin{enumerate} [(1)]
	\item There exists a point $p_0\in M$ such that $\displaystyle \max_{M}H(\cdot,t_1)$ is attained at $p_0$.
	\item There exists a point $p_0\in M$ such that
	\begin{equation}\label{bound_condition}
	\text{either }\quad\sup_{j\in\mathbb{N}}\left|\,  x(\phi^{-j}(p_0),t_1) \, \right|<\infty\quad \text{ or }\quad \sup_{j\in\mathbb{N}}\left|\, x(\phi^{j}(p_0),t_1) \, \right|<\infty.
	\end{equation}
	\end{enumerate}
	Then $x_0$ must be a translator. 
\end{thm}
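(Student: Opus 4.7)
The plan is to extend the steady breather to an eternal mean curvature flow by iterating the breather relation, and then apply Hamilton's differential Harnack inequality, whose rigidity case characterizes translators among weakly convex eternal flows with bounded second fundamental form. Concretely, I would set $x(p, t_1 + k(t_2-t_1) + r) := F^k \circ x_0(\phi^k(p), t_1 + r)$ for $k \in \mathbb{Z}$ and $r \in [0, t_2-t_1)$; weak convexity, completeness, and the uniform bound on $|A|$ all descend to this extension since they are invariant under isometries and reparametrizations. Hamilton's Harnack inequality then yields the non-negativity of
\[
Q(V) := H_t + 2\langle \nabla H, V \rangle + A(V,V)
\]
for every tangent vector $V$, so in particular $H_t \geq 0$; and by the strong maximum principle applied to the evolution equation of $Q$, if $Q(V_0)$ vanishes at a single spacetime point for some $V_0$, then the flow must be a translator.

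Since mean curvature is preserved by $F$ and $\phi$, we have $H(p, t_1 + k(t_2-t_1)) = H(\phi^k(p), t_1)$ for all $k \in \mathbb{Z}$. In Case (1), the choice of $p_0$ combined with Harnack monotonicity gives the chain
\[
H(p_0, t_1) \leq H(p_0, t_2) = H(\phi(p_0), t_1) \leq \max_M H(\cdot, t_1) = H(p_0, t_1),
\]
forcing $H(p_0, \cdot)$ to be constant on $[t_1, t_2]$ and hence $H_t(p_0, t_0) = 0$ for some $t_0$; the Harnack rigidity then delivers the translator conclusion.

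For Case (2), assume without loss of generality that the forward orbit $\{x_0(\phi^j(p_0), t_1)\}_{j \geq 0}$ is bounded. By Harnack monotonicity and the uniform bound on $|H|$, the sequence $H_j := H(\phi^j(p_0), t_1) = H(p_0, t_1 + j(t_2-t_1))$ is non-decreasing and bounded, hence converges to some $L$. Under the assumptions of the theorem, a Sacksteder-type properness statement applies to the complete weakly convex immersion $x_0(\cdot, t_1)$, so a subsequence $\phi^{j_k}(p_0)$ converges to some $q_\infty \in M$. Continuity of $\phi$ and $H$ then give
\[
H(\phi^m(q_\infty), t_1) = \lim_{k \to \infty} H(\phi^{m+j_k}(p_0), t_1) = L, \qquad m \in \mathbb{Z},
\]
i.e., $H(q_\infty, t_1 + m(t_2-t_1)) = L$ for every integer $m$. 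Combined with $H_t \geq 0$, this forces $H(q_\infty, t) \equiv L$ on $\mathbb{R}$, so $H_t(q_\infty, t_1) = 0$, and the Harnack rigidity again delivers the translator conclusion.

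The main obstacle is establishing the properness of the immersion needed in Case (2) to extract $q_\infty \in M$ from the mere boundedness of the image orbit in $\mathbb{R}^{n+1}$; this is where weak convexity combined with completeness and noncompactness plays an essential role, and some care is required for small $n$. A backup approach would be to instead run Hamilton's smooth compactness on the isometrically shifted flows $y_j := F^{-j} \circ x(\cdot, \cdot + j(t_2-t_1))$, obtain a limit translator $y_\infty$, and then transfer the translator structure back to $x_0$ via MCF unique continuation; this avoids properness but requires more machinery.
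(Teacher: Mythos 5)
Your overall strategy---splice the breather into an ancient/eternal solution, invoke Hamilton's Harnack inequality to get $\partial_t H\geq 0$, locate a single space--time point where the Harnack quantity vanishes, and conclude via the strong-maximum-principle rigidity (after splitting off a flat factor, since the convexity is only weak)---is exactly the paper's strategy. Your Case (1) is essentially identical to the paper's: the chain $H(p_0,t_1)\leq H(p_0,t_2)=H(\phi(p_0),t_1)\leq\max_M H(\cdot,t_1)=H(p_0,t_1)$ is the same mechanism by which the paper shows the space--time maximum of $H$ on the spliced ancient solution is attained at $(p_0,0)$, after which both arguments reduce to the computation in \cite{RH}.

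The divergence is in Case (2), and there your primary route has a genuine gap, which you yourself flag: to turn boundedness of the image orbit $x_0(\phi^{j}(p_0),t_1)$ in $\mathbb{R}^{n+1}$ into a convergent subsequence $\phi^{j_k}(p_0)\to q_\infty$ in $M$ you need the time-$t_1$ immersion to be proper, and ``complete plus weakly convex'' does not deliver this in the stated generality. After the strong-maximum-principle splitting $M=N^k\times\mathbb{R}^{n-k}$, Sacksteder/van Heijenoort gives a properly embedded convex hypersurface only for $k\geq 2$; for $k=1$ (and for $n=1$ outright) the strictly convex factor is a complete locally convex planar curve, which can spiral and accumulate on a circle, so it is complete, weakly convex, with bounded second fundamental form, yet not proper---and then a bounded orbit produces no limit point in $M$. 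So the step ``a Sacksteder-type properness statement applies'' is unproved and not true as a general statement about the class of hypersurfaces in the hypotheses. The paper avoids extracting a limit point in $M$: it applies the compactness theorem \cite[Theorem 11.12]{ACGL} to the shifted flows $\tilde x_j(p,t)=x(\phi^j(p),t-j)+\sum_{k=1}^{j}\mathcal R^{-k}V$, which coincide \emph{exactly} with $\mathcal R^{-j}x_0(p,t)$ for $t\in[0,1]$, so along a subsequence with $\mathcal R^{-j}\to\mathcal R^{\infty}$ the limit flow is exactly $\mathcal R^{\infty}x_0$ on $[0,1]$; the bounded-orbit hypothesis (\ref{bound_condition}) is used only to produce a base point $p_\infty$ in the limit at which $\partial_t\tilde H_\infty=0$ (from the monotone, bounded function $H(p_0,\cdot)$), and the Harnack rigidity is then run on the limit and transferred back to $x_0$ with no unique-continuation argument, contrary to what your backup suggests. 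That backup is otherwise essentially the paper's proof; note, however, that the compactness theorem the paper uses itself assumes properly immersed flows with local area bounds (the paper attributes both to the convexity assumption), so to make either route airtight you must either justify that properness input or replace it by a pointed compactness theorem for immersed flows with bounded second fundamental form that does not require properness.
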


\begin{rem}
	The examples in \cite[Figures 10---11]{H} evolve by rotation along with expansion. They are not weakly convex.  This shows that without the weakly convex condition, Theorem \ref{main3} does not hold.
	The Altschuler's yin-yang spiral in \cite{Al} is a rotator satisfying (\ref{bound_condition}); to see this, one needs only to observe that this curve passes through the origin, and then check the definition of rotator in section 2; see (\ref{rotator_generated_vector}). However, it is not weakly convex. This shows that without the weakly convex condition, Theorem \ref{main5}(2) does not hold.
\end{rem}

An immediate application of Theorem \ref{main} and Theorem \ref{main3} is the following
corollary.
\begin{cor}\label{cor_main}
There exists no complete self-similar solution to the mean curvature flow satisfying (\ref{integral_bound}) which rotates and shrinks at the same time. Moreover, there exists no complete weakly convex self-similar solution to the mean curvature flow with bounded second fundamental form which rotates and expands at the same time. 
\end{cor}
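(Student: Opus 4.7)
The plan is to argue by contradiction: view any such self-similar solution as a breather on a suitable pair of time instances, apply Theorem \ref{main} (respectively Theorem \ref{main3}) to force it to be a self-shrinker (respectively self-expander), and then observe that a solitonic solution cannot carry a genuine ambient rotation.

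For the first statement, I would take a complete self-similar solution $x: M^n \times I \to \mathbb{R}^{n+m}$ satisfying (\ref{integral_bound}) which rotates and shrinks, write $x(M, t) = \alpha(t) F(t) \circ x(M)$ with $F(t)y = R(t)y + b(t)$ and $R(t) \in O(n+m)$, and use the hypotheses to choose $t_1 < t_2$ in $I$ with $\tilde\alpha := \alpha(t_2)/\alpha(t_1) \in (0,1)$ and $R(t_2) R(t_1)^{-1}$ acting non-trivially on $x(M, t_1)$. A direct algebraic manipulation then yields
$$
x(M, t_2) = \tilde\alpha \cdot \tilde F \circ x(M, t_1),
$$
for a suitable isometry $\tilde F$ whose rotational part is $R(t_2) R(t_1)^{-1}$, so that $x$ restricted to $[t_1, t_2]$ becomes a shrinking breather. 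Theorem \ref{main} would then force $x$ to be a self-shrinker. Since a self-shrinker evolves by pure homothetic scaling about a fixed center (modulo tangential reparametrization), $x(M, t_2)$ would then be merely a scaled translate of $x(M, t_1)$, contradicting the non-trivial rotation in $\tilde F$.

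The expanding claim would follow by an entirely analogous argument, with Theorem \ref{main3} in place of Theorem \ref{main}: completeness, weak convexity, and bounded second fundamental form all pass to the restriction $[t_1, t_2]$, and a self-expander is likewise pure homothetic scaling from a fixed center, which again is incompatible with a genuine rotation.

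The main obstacle I anticipate is the final rigidity step: one must argue carefully that the self-shrinker (or self-expander) representation of the flow cannot absorb a non-trivial ambient rotation. This reduces to the fact that the homothetic center and scaling rate of a solitonic self-similar solution are intrinsically determined by the flow, so any additional isometry appearing in a second self-similar representation is forced to be an internal symmetry of the image, which is precisely what the ``rotates'' hypothesis rules out.
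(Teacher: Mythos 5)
Your proposal is correct and follows essentially the same route as the paper, which treats Corollary \ref{cor_main} as an immediate application of Theorems \ref{main} and \ref{main3}: a self-similar solution that rotates while shrinking (resp.\ expanding) is, between any two time instances, a shrinking (resp.\ expanding) breather satisfying the relevant hypotheses, so the theorems force it to be a self-shrinker (resp.\ self-expander), contradicting the presence of a genuine (non-symmetry) rotation. Your closing remark on the rigidity step is exactly the right reading of ``rotates'': the rotation must not be absorbable as an internal symmetry of the image, which is how the mixed-type examples of Halldorsson are meant to be excluded.
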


Though Theorem \ref{main5} has a corollary similar to Corollary \ref{cor_main}, we have a stronger result  based on a more direct proof.

\begin{thm}\label{Coro_main5}
Assume that $x_0:M^n\to\mathbb{R}^{n+1}$ is a  properly immersed weakly mean convex rotator. Then $x_0$ must be a minimal hypersurface. If we further assume $x_0$ to be weakly convex, then it must be a hyperplane.
\end{thm}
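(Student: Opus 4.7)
The plan is to combine the rotator identity in codimension one with an elliptic equation for the mean curvature $H$, and then use properness to locate a zero of $H$. In codimension one, the rotator condition reads $H = \langle A x_0, \nu\rangle$, where $A$ is the skew-symmetric matrix generating the one-parameter group of ambient rotations.

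First I would derive an elliptic equation for $H$ by computing $\Delta_M H$ from $H = \langle Ax_0, \nu\rangle$ in a local orthonormal frame $\{e_i\}$ on $M$. Two algebraic cancellations---$\langle A\nu, \nu\rangle = 0$ and $\sum_{i,j} h_{ij}\langle Ae_i, e_j\rangle = 0$, both following from the skew-symmetry of $A$ against the symmetry of $h_{ij}$---combine with the Gauss, Weingarten, and Codazzi identities to produce
\begin{equation*}
\Delta_M H + \langle (Ax_0)^T, \nabla H\rangle + |\mathcal{A}|^2 H = 0,
\end{equation*}
where $(Ax_0)^T$ is the tangential projection of $Ax_0$ to $M$ and $|\mathcal{A}|^2$ is the squared norm of the second fundamental form.

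Next I would locate a zero of $H$ using properness. Because the sublevel sets $\{|x_0| \leq R\}$ are compact, the function $p \mapsto |x_0(p)|^2$ attains its infimum at some $p_0 \in M$. At $p_0$ the tangential gradient $2\, x_0(p_0)^T$ vanishes, so $x_0(p_0)$ is either zero or parallel to $\nu(p_0)$; combined with $\langle Ax_0, x_0\rangle \equiv 0$, this forces $\langle Ax_0(p_0), \nu(p_0)\rangle = 0$, hence $H(p_0) = 0$.

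Finally, the elliptic equation rewrites as $\Delta_M H + \langle (Ax_0)^T, \nabla H\rangle = -|\mathcal{A}|^2 H \leq 0$ under the hypothesis $H \geq 0$, so the strong maximum principle at the interior minimizer $p_0$ gives $H \equiv 0$ on the connected component of $p_0$, and thus on all of $M$. This yields minimality. If in addition $x_0$ is weakly convex, then the principal curvatures $\lambda_i \geq 0$ with $\sum_i \lambda_i = H \equiv 0$ force $\lambda_i \equiv 0$, so $\mathcal{A} \equiv 0$; a properly immersed, connected, totally geodesic hypersurface in $\mathbb{R}^{n+1}$ is a hyperplane. The main obstacle is setting up the elliptic equation in the first step with all signs in place; once it is established, the minimum-of-$|x_0|^2$ observation and the strong maximum principle complete the argument directly.
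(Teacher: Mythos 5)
Your argument is correct, and the crucial step---locating a zero of $H$ at a minimizer $p_0$ of $|x_0|^2$ (which exists by properness), using that the tangential part of $x_0$ vanishes there and that skew-symmetry gives $\langle Ax_0,x_0\rangle\equiv 0$---is exactly the paper's observation. Where you diverge is in how you propagate $H(p_0)=0$ to $H\equiv 0$: you derive the static drift equation $\Delta_M H+\langle (Ax_0)^{T},\nabla H\rangle+|\mathcal{A}|^2H=0$ directly on the hypersurface (your two cancellations $\langle A\nu,\nu\rangle=0$ and $\sum_{i,j}h_{ij}\langle Ae_i,e_j\rangle=0$ are the right ones, and the computation does close up via Codazzi), rewrite it as $\Delta_M H+\langle (Ax_0)^{T},\nabla H\rangle\le 0$ using $H\ge 0$, and apply the elliptic strong minimum principle at the interior zero minimum. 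The paper instead lets $x_0$ generate a mean curvature flow, invokes the standard evolution equation for $H$ along that flow, and applies the parabolic strong maximum principle; it also disposes of the closed case separately by the finite-time-singularity argument, which your route makes unnecessary since properness yields the minimizer of $|x_0|^2$ whether or not $M$ is compact. Your elliptic route is more self-contained (no appeal to the generated eternal flow), at the cost of verifying the soliton-type identity for $H$; the paper's route is shorter given that the evolution of $H$ under mean curvature flow is standard. The only cosmetic caveat is the usual implicit assumption that $M$ is connected (as in the paper); otherwise one simply runs your argument on each component, since properness provides a minimizer of $|x_0|^2$ on each. The concluding step, that weak convexity plus $H\equiv 0$ forces $\mathcal{A}\equiv 0$ and hence a hyperplane, matches the paper.
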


Finally, let us recall the statements of the different versions of no breather theorems for the Ricci flow. All of them, except for the diverse conditions, are formulated in almost the same way, namely, a breather, under certain conditions, must be the critical point of certain monotonicity formula, ergo a gradient Ricci soliton. In other words, under these conditions all breathers must be ``trivial''. Therefore, it remains interesting to find examples of nontrivial breathers. Obviously, there are ``semi-nontrivial'' examples---the non-gradient solitons; see \cite{L} for an example of complete non-gradient expanding soliton. The canonical form of a non-gradient Ricci soliton flows by self-diffeomorphism and hence satisfies the definition of the breather, but it is not a gradient soliton, that is, a critical point of a monotonicity formula (Perelman's entropy or Hamilton's Harnack). Nevertheless, the non-gradient solitons still move by self-diffeomorphism. Recently, Topping \cite{T} has constructed a ``truly'' nontrivial expanding breather of the Ricci flow, that is, he has proved there exists an expanding breather of the Ricci flow which is not the Ricci soliton. As we have seen above, the same issue arises when studying the breathers of the mean curvature flow. It remains interesting to ask whether there exists non-self-similar mean curvature flow breather, namely, mean curvature flow solution with nontrivial periodicity. We conjecture that there at least exists a non-self-similar expanding breather of the mean curvature flow as in the case of the Ricci flow.

\section{Preliminaries}

\subsection{Huisken's monotonicity formula}

Huisken's monotonicity formula was originally defined for compact mean curvature flows. Nevertheless, there are multiple ways to generalize it to the noncompact case. We shall include a version that is proper to our end. First of all, Let us define the backward heat kernel $\Phi_{x_0,t_0}:\mathbb{R}^{n+m}\times(-\infty,t_0)\rightarrow\mathbb{R}$ as 
\begin{eqnarray}\label{BHK}
\Phi_{x_0,t_0}(x,t):=(4\pi(t_0-t))^{-\frac{n}{2}}e^{-\frac{|x-x_0|^2}{4(t_0-t)}}.
\end{eqnarray}

\begin{thm}\label{Huiskenmonotonicity}
Let $x:M^n\times I\rightarrow\mathbb{R}^{n+m}$ be a smooth and complete immersed solution to the mean curvature flow. Given $(x_0,t_0)\in \mathbb{R}^{n+m}\times\mathbb{R}$, let $t_1,t_2\in(0,t_0)\cap I$ with $t_1<t_2$. If
$$
\int_M\Phi_{x_0,t_0}(x,0)d\mu_0<\infty,
$$
then we have
$$
\int_M\Phi_{x_0,t_0}(x,t)d\mu_t\le \int_M\Phi_{x_0,t_0}(x,0)d\mu_0<\infty
$$
for all $t>0$ under the mean curvature flow.
Moreover we have the following Huisken's monotonicity holds
\begin{align}\label{eqmonotonicityhuisken}
&\int_M \Phi_{x_0,t_0}(x(\cdot,t_2),t_2)d\mu_{t_2}-\int_M \Phi_{x_0,t_0}(x(\cdot,t_1),t_1)d\mu_{t_1}
\\\nonumber
= \ &-\int_{t_1}^{t_2}\int_M \left|\vec{H}+\frac{(x(\cdot,t)-x_0)^\perp}{2(t_0-t)}\right|^2\Phi_{x_0,t_0}(x(\cdot,t),t)d\mu_t dt,
\end{align}
where $\Phi_{x_0,t_0}$ is the backward heat kernel defined in (\ref{BHK}) and $\mu_t$ stands for the Riemannian measure induced by the immersion $x(\cdot,t):M\rightarrow\mathbb{R}^{n+m}$.
\end{thm}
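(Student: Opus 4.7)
First, I would establish the pointwise identity
\[
\frac{d}{dt}\bigl(\Phi\, d\mu_t\bigr) + \Delta_M\Phi\, d\mu_t \;=\; -\left|\vec{H} + \frac{(x-x_0)^\perp}{2(t_0-t)}\right|^{2}\Phi\, d\mu_t
\]
by direct computation: combine $\partial_t x=\vec{H}$ (so $\partial_t d\mu_t=-|\vec{H}|^2 d\mu_t$), the formulas $\partial_t\Phi = \Phi\bigl[\tfrac{n}{2(t_0-t)}-\tfrac{|x-x_0|^2}{4(t_0-t)^2}\bigr]$ and $D\Phi=-\tfrac{\Phi}{2(t_0-t)}(x-x_0)$, together with $\Delta_M|x-x_0|^2=2n+2\vec{H}\cdot(x-x_0)$, which itself follows from $\Delta_M x=\vec{H}$. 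The orthogonal splitting $|x-x_0|^2=|(x-x_0)^T|^2+|(x-x_0)^\perp|^2$ completes the square and produces the soliton expression on the right-hand side. On a closed manifold $\int_M\Delta_M\Phi\,d\mu_t=0$ and Huisken's formula follows immediately.

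For the noncompact case I would fix a smooth non-increasing $\psi\colon[0,\infty)\to[0,1]$ with $\psi\equiv 1$ on $[0,1]$, $\psi\equiv 0$ on $[2,\infty)$, and $|\psi'|+|\psi''|\le C$, and for each $R>0$ define the ambient cutoff $\eta_R(y):=\psi(|y-x_0|/R)$ on $\mathbb{R}^{n+m}$. Then $\eta_R$ is smooth with compact ambient support, satisfies $|D\eta_R|\le C/R$, and is concentrated on the annulus $A_R:=\{R\le|y-x_0|\le 2R\}$. Multiplying the pointwise identity by $\eta_R^2\circ x$, integrating over $M$, and moving the Laplacian off $\Phi$ via one integration by parts yields
\[
\frac{d}{dt}\!\int_M \eta_R^2\Phi\, d\mu_t \;=\; -\!\int_M \eta_R^2\left|\vec{H} + \tfrac{(x-x_0)^\perp}{2(t_0-t)}\right|^{2}\!\!\Phi\, d\mu_t \;+\; E_R(t),
\]
where
\[
E_R(t) \;=\; 2\!\int_M \eta_R \bigl\langle D\eta_R,\vec{H}\bigr\rangle\Phi\, d\mu_t \;-\; 2\!\int_M \eta_R \bigl\langle \nabla^M\eta_R,\nabla^M\Phi\bigr\rangle\, d\mu_t
\]
is supported on $A_R$, with each $D\eta_R$ factor contributing a $C/R$ gain. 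The integration by parts is legitimate because $\eta_R\circ x$ has compact ambient support and $M$ is complete.

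The main obstacle is showing $E_R(t)\to 0$ without any a priori bound on $|\vec{H}|$. My strategy is a two-step bootstrap. First, apply Young's inequality to the $\vec{H}$-piece of $E_R$ with a small parameter $\epsilon>0$, and then use $|\vec{H}|^2\le 2|\vec{H}+(x-x_0)^\perp/(2(t_0-t))|^2 + |(x-x_0)^\perp|^2/(2(t_0-t)^2)$ to absorb a fraction of $\epsilon\int\eta_R^2|\vec{H}|^2\Phi$ into the non-positive term on the right; dropping that term and integrating in time produces a cutoff inequality whose limit $R\to\infty$, together with a slightly enlarged backward heat kernel comparison controlling the $|x-x_0|^2\Phi$ remainder, delivers the a priori bound $\int_M\Phi(\cdot,t)\,d\mu_t\le\int_M\Phi(\cdot,0)\,d\mu_0<\infty$ for all $t\in(0,t_0)\cap I$. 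Second, with this global finiteness in hand, the pointwise Gaussian decay $\Phi\le C(t_0-t)^{-n/2}e^{-R^2/(4(t_0-t))}$ on $A_R$, combined with the now-bounded total mass $\int_M\Phi\,d\mu_t$, forces $|E_R(t)|\to 0$ pointwise and uniformly on compact subintervals of $(0,t_0)\cap I$ without any absorption. Dominated convergence then passes the localized identity to the limit and yields the Huisken monotonicity formula on $[t_1,t_2]$.
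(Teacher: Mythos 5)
Your localization scheme has a genuine gap, both at its foundation and at its key estimate. First, the theorem assumes only a \emph{complete} immersion, not a proper one; for a non-proper immersion (e.g.\ the spiral-type examples the paper cites, which accumulate on a bounded set), the preimage $x^{-1}(\overline{B_{2R}(x_0)})$ need not be compact in $M$, so $\eta_R\circ x$ is \emph{not} compactly supported on $M$, and ``compact ambient support plus completeness'' does not license your integration by parts. Second, and more seriously, the error control does not close. The term $-2\int_M\eta_R\langle\nabla^M\eta_R,\nabla^M\Phi\rangle d\mu_t$ has no $C/R$ gain: since $\nabla^M\Phi=-\tfrac{1}{2(t_0-t)}(x-x_0)^\top\,\Phi$ and $|x-x_0|\sim R$ on the annulus $A_R$, this term is of size $\tfrac{C}{t_0-t}\int_{A_R}\Phi\,d\mu_t$, and making it vanish as $R\to\infty$ already requires the finiteness/tightness of $\int_M\Phi\,d\mu_t$ at the \emph{later} time --- precisely what your first step is supposed to establish. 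Likewise, after Young's inequality the remainder $\epsilon\,\tfrac{|x-x_0|^2}{(t_0-t)^2}\,\Phi$ on $A_R$ can only be absorbed by a Gaussian of strictly larger variance (your ``slightly enlarged backward heat kernel''), but finiteness of the $\mu_0$-integral of a slower-decaying Gaussian is a strictly \emph{stronger} hypothesis than the stated one ($\Phi_{x_0,t_0}(\cdot,0)$ is the borderline weight), and there is no a priori local area bound to substitute for it. So the bootstrap is circular unless you add an $\epsilon$-margin to the weight, or area-ratio/curvature assumptions --- exactly the extra hypotheses this theorem is designed to avoid.

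The paper sidesteps all of this by never integrating by parts: it reparametrizes the flow by the tangential drift $\partial_t x=\vec H-\tfrac{(x-x_0)^\top}{2(t_0-t)}$ (as in Cheng--Sesum). In that gauge a direct computation shows that the tangential motion converts the $|x-x_0|^2$ appearing in your calculation into $|(x-x_0)^\perp|^2$ and no divergence term arises, giving the \emph{pointwise} identity $\frac{\partial}{\partial t}\bigl(\Phi_{x_0,t_0}(x(\cdot,t),t)\,d\mu_t\bigr)=-\bigl|\vec H+\tfrac{(x-x_0)^\perp}{2(t_0-t)}\bigr|^2\Phi_{x_0,t_0}\,d\mu_t\le 0$. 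Monotonicity of the measure $\Phi_{x_0,t_0}\,d\mu_t$ is therefore pointwise; integrating in time and applying Tonelli to the nonnegative integrand yields both the uniform bound and (\ref{eqmonotonicityhuisken}) under the sharp hypothesis, with no cutoffs, no properness, and no curvature or area assumptions. As written, your proposal does not prove the theorem; to salvage the cutoff route you would need either a strictly enlarged Gaussian weight in the hypothesis or a priori local area bounds.
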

\begin{proof}
	In order to avoid the integration by parts which may depend on extra conditions, 
 we consider the following drifted mean curvature flow as in \cite{CS} 
\begin{equation}\label{drifting_mcf}
\partial_t  x=\vec H-\frac{(x-x_0)^\top}{2(t_0-t)},
\end{equation}
Notice that (\ref{drifting_mcf}) differs from the original mean curvature flow only by the tangential tangential flow
generated by the vector fields $-\frac{(x-x_0)^\top}{2(t_0-t)}$.
We denote by $\{\nu_{\alpha}\}_{\alpha=n+1}^{n+m}$ an orthonormal frame of the normal bundle.
Then we may compute
\begin{align*}
	\frac{\partial g_{i j}}{\partial t} &=2 \partial_{i}\left(\vec H-\frac{(x-x_0)^\top}{2(t_0-t)}\right) \cdot \partial_{j} x \\
	&=-2\vec H \cdot \vec h_{i j}-\frac{1}{t_0-t} \partial_{i}\left((x-x_0)-\sum\limits_{\alpha}\langle x-x_0,\nu^{\alpha}\rangle \nu^{\alpha}\right) \partial_{j} x \\
	&=-2\vec H \cdot \vec h_{i j}-\frac{1}{t_0-t} g_{i j}-\frac{1}{t_0-t}\sum\limits_{\alpha}\langle x-x_0,\nu^{\alpha}\rangle  h^{\alpha}_{i j} ,
\end{align*}
where $h^{\alpha}_{ij}=\nu_{\alpha}\cdot \partial_{i}\partial_{j} x$, $\vec h_{i j}=h^{\alpha}_{ij}\nu_{\alpha}$, and $\vec H=g^{ij}\vec h_{ij}$.
It follows that
\begin{align*}
	\frac{\partial }{\partial t} d\mu_t=\left(-|\vec H|^2-\frac{n}{2(t_0-t)}-\frac{1}{2(t_0-t)}\langle x-x_0,\vec H\rangle\right)  d\mu_t  .
\end{align*}
Moreover, we have
\begin{align*}
\frac{\partial}{\partial t}\Phi_{x_0,t_0}( x(\cdot,t),t) =\left(-\frac{|(x-x_0)^{\perp}|^2}{4(t_0-t)^2}+\frac{n}{2(t_0-t)}-\frac{1}{2(t_0-t)}\langle x-x_0,\vec H\rangle\right)\Phi_{x_0,t_0}( x(\cdot,t),t). 
\end{align*}
Hence
we have
\begin{align*}
	\frac{\partial}{\partial t}(\Phi_{x_0,t_0}( x(\cdot,t),t)d\mu_t) =-\left|\vec{H}+\frac{(x(\cdot,t)-x_0)^\perp}{2(t_0-t)}\right|^2\Phi_{x_0,t_0}(x(\cdot,t),t)d\mu_t\leq 0. 
\end{align*}
Note that the  flow (\ref{drifting_mcf}) is only a reparametrization of the original one, which does not affect the computation of Huisken's functional. The conclusion then follows from Tonelli's theorem.
\end{proof}

\subsection{Self-similar solutions}

We briefly introduce different kinds of self-similar solutions to the mean curvature flow. For a more detailed treatment, refer to \cite[Section 6.2]{ACGL}.

\emph{Translators.}  If $x_0:M^n\to\mathbb{R}^{n+m}$ is an immersed submanifold satisfying 
$$\vec{H}=e^{\perp},$$
where $\vec{H}$ is the mean curvature vector and $e\in\mathbb{R}^{n+m}$ is a fixed unit vector, then this immersion generates a mean curvature flow moving by translation.  In other words, $x:M^n\times\mathbb{R}\to\mathbb{R}^{n+m}$ defined by
$$x(p,t):=x_0(\phi(p,t))+et,$$
where $\phi$ is the flow generated by $V:=-dx_0^{-1}(e^{\top})$, evolves by the mean curvature. Here and below, we use $dx_0: \Gamma(TM)\to C^\infty(M,\mathbb{R}^{n+m})$ to denote the push-forward induced by the immersion $x_0$.

\emph{Self-shrinkers.} Let $x_0: M^n\to\mathbb{R}^{n+m}$  be an immersed submanifold satisfying
$$\vec{H}=-\frac{1}{2}x_0^\perp,$$
then $x:M^n\times(-\infty,0)\to\mathbb{R}^{n+m}$ defined by
$$x(p,t):=\sqrt{-t}x_0\left(\phi\left(p,\tfrac{1}{2}\log(-t)\right)\right),$$
where $\phi$ is the flow generated by $V:=-dx_0^{-1}(x_0^\top)$, is a solution to the mean curvature flow. It is well known that a self-shrinker is a critical point of Huisken's monotonicity formula.

\emph{Self-expanders.} Self-expanders are similar to self-shrinkers, since their definitions differ only by a sign. Let $x_0: M^n\to\mathbb{R}^{n+m}$  be an immersed submanifold satisfying
$$\vec{H}=\frac{1}{2}x_0^\perp,$$
then $x:M^n\times(0,\infty)\to\mathbb{R}^{n+m}$ defined by
$$x(p,t):=\sqrt{t}x_0\left(\phi\left(p,\tfrac{1}{2}\log(t)\right)\right),$$
where $\phi$ is the flow generated by $V:=-dx_0^{-1}(x_0^\top)$, is a solution to the mean curvature flow.

\emph{Rotators.} Let $x_0: M^n\to\mathbb{R}^{n+m}$ be an immersed submanifold and $J\in\mathfrak{so}(n+m)$, satisfying
$$\vec{H}=(Jx_0)^\perp,$$
then $x:M^n\times\mathbb{R}\to\mathbb{R}^{n+m}$ defined by 
$$x(p,t):=e^{tJ}\cdot x_0(\phi(p,t)),$$
where $\phi$ is the flow generated by
\begin{equation}\label{rotator_generated_vector}
	 V:=-dx_0^{-1}\left((Jx_0)^\top\right),
\end{equation}
 is a rotating self-similar solution to the mean curvature flow.

\emph{Mixed type.} There also exist self-similar solutions which evolve by more complex self-similarities, for instance, rotation together with shrinking/expansion. For these examples, refer to \cite{H}.

\section{Proof of Theorem \ref{main}}
The idea of proving Theorem \ref{main} is similar as that which was applied in \cite{CZhang} and \cite{Zh}. We will first of all implement the method in \cite{LZ} to construct an ancient solution using the shrinking breather, by splicing properly scaled and properly located copies of the breather head-to-tail. Then, this ancient solution admits a blow-down limit, which must be a critical point of Huisken's monotonicity formula, and hence a self-shrinker. Finally, because of the construction of the ancient solution, the limit must be identical to the original breather up to a scale and an isometry. This is the idea of the proof.

After parabolic scaling and translating in time, we let $x:M^n\times[0,1]\to \mathbb{R}^{n+m}$, where $t_1=0$ and $t_2=1$, be the breather in the statement of Theorem \ref{main}. Let $\alpha\in(0,1)$ be the constant therein. According to the definition of breather, we can find an isometry $F:\mathbb{R}^{n+m}\to \mathbb{R}^{n+m}$ and a self-diffeomorphism $\phi:M\to M$, such that
$$
x(p,1)=\alpha F\circ x(\phi(p),0)\quad \text{ for all }\quad p\in M.
$$
For the sake of convenience, let $y_0(p,\tau)=x(p,t)$ with $\tau(t)=1-t$, and the backward time $\tau$ shall be considered instead throughout this section. Since  $F$ is an isometry on $\mathbb{R}^{n+m}$, there exist an orthogonal matrix $\mathcal{R}\in O(n+m)$  and  a constant vector $V\in\mathbb{R}^{n+m}$ such that
\begin{equation}\label{breather_1}
y_0(p,1)=F^{-1}\left(\alpha^{-1}  y_0\left(\phi^{-1}(p),0\right)\right)=\alpha^{-1} \mathcal{R} y_0\left(\phi^{-1}(p),0\right)+V\quad \text{ for all }\quad p\in M.
\end{equation}
For each $j\geq 0$ we define 
\begin{eqnarray}\label{y_j_definition}
\displaystyle \tau_j&=&\sum^j_{k=0}\alpha^{-2k},
\\\nonumber
y_j(p,\tau)&=&\alpha^{-j}\mathcal R^j y_0\left(\phi^{-j}(p),\alpha^{2j}( \tau- \tau_{j-1})\right)+\sum\limits_{k=0}^{j-1}\alpha^{-k}\mathcal R^k V,\ \  \tau\in [ \tau_{j-1}, \tau_j].
\end{eqnarray}
Then we obviously have $\tau_j\nearrow \infty$. 
As mentioned at the beginning of the section,  we define the spliced ancient solution 
\begin{eqnarray}\label{defined_ancient_solution}
y(p ,\tau)=\left\{
\begin{array}{ll}
y_0(p ,\tau), \quad &  \tau\in [0,1], \\
y_j(p ,\tau), \quad & \tau\in [\tau_{j-1}, \tau_j].
\end{array}
\right.
\end{eqnarray}

	\begin{lem}\label{TypeIancientsolution}
	$y(p ,\tau)$ defined in (\ref{defined_ancient_solution}) is a smooth ancient solution to the backward mean curvature flow.
\end{lem}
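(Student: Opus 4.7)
The plan is to verify three things in sequence: (i) on each interval $[\tau_{j-1},\tau_j]$, $y_j$ is a smooth backward MCF solution; (ii) consecutive pieces agree at every junction $\tau=\tau_{j-1}$; and (iii) the spliced function $y$ is $C^\infty$ across each junction. The backward MCF equation is $\partial_\tau y=-\vec{H}$, obtained from the original MCF by the time reversal $\tau=1-t$, and the usual symmetries of MCF transfer to this setting.

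For step (i), I would appeal to three standard invariances of MCF: parabolic rescaling $x(p,t)\mapsto \lambda x(p,\lambda^{-2}t)$, composition with an ambient isometry, and reparameterization by a time-independent self-diffeomorphism of $M$. Reading off \eqref{y_j_definition}, $y_j$ is built from $y_0$ by applying exactly these three operations: the tangential reparameterization $\phi^{-j}$, the time rescaling $\tau\mapsto \alpha^{2j}(\tau-\tau_{j-1})$ together with the spatial scaling $\alpha^{-j}$, the rotation $\mathcal{R}^j\in O(n+m)$, and the translation by the constant vector $\sum_{k=0}^{j-1}\alpha^{-k}\mathcal{R}^k V$. Since each of these operations preserves MCF, $y_j$ is a smooth backward MCF solution on $[\tau_{j-1},\tau_j]$. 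The choice $\tau_j=\sum_{k=0}^{j}\alpha^{-2k}$ is exactly what makes the rescaled time argument $\alpha^{2j}(\tau-\tau_{j-1})$ sweep across $[0,1]$ as $\tau$ traverses $[\tau_{j-1},\tau_j]$, so it stays within the domain of $y_0$.

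For step (ii), I would compute $y_{j-1}(p,\tau_{j-1})$ and $y_j(p,\tau_{j-1})$ directly and show they coincide. Using $\alpha^{2(j-1)}(\tau_{j-1}-\tau_{j-2})=1$ one obtains
\begin{equation*}
y_{j-1}(p,\tau_{j-1})=\alpha^{-(j-1)}\mathcal{R}^{j-1}y_0\bigl(\phi^{-(j-1)}(p),1\bigr)+\sum_{k=0}^{j-2}\alpha^{-k}\mathcal{R}^k V.
\end{equation*}
Inserting the breather relation \eqref{breather_1} at the point $\phi^{-(j-1)}(p)$ rewrites $y_0(\phi^{-(j-1)}(p),1)$ as $\alpha^{-1}\mathcal{R}y_0(\phi^{-j}(p),0)+V$, and after distributing and telescoping the constants $\alpha^{-k}\mathcal{R}^k V$ one obtains exactly the formula for $y_j(p,\tau_{j-1})$ from \eqref{y_j_definition}. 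Hence $y$ is continuous across each junction as a map $M\times\mathbb{R}_{\geq 0}\to \mathbb{R}^{n+m}$.

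For step (iii), I would argue that smoothness at a junction follows from matching of all one-sided time derivatives. On each smooth piece, $\partial_\tau y=-\vec{H}$ and each higher derivative $\partial_\tau^k y$ is a universal polynomial expression in the second fundamental form of the immersion $y(\cdot,\tau)$ and its intrinsic covariant derivatives. Since step (ii) shows that $y_{j-1}(\cdot,\tau_{j-1})$ and $y_j(\cdot,\tau_{j-1})$ are literally the same parameterized immersion of $M$, all these geometric data agree at $\tau_{j-1}$, so all one-sided time derivatives of $y$ at $\tau_{j-1}$ match and the spliced solution is $C^\infty$. The main thing to be careful about is this last step, since one might worry that gluing two solutions of a backward-parabolic equation at a single time could introduce hidden irregularity; expressing $\partial_\tau^k y$ geometrically in terms of data of the immersion sidesteps this and makes the argument completely local in $\tau$. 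Everything else is bookkeeping with \eqref{breather_1} and the telescoping of the translation vectors.
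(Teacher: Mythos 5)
Your proof is correct and follows the same overall strategy as the paper's: each piece $y_j$ solves the backward flow because it is obtained from $y_0$ by parabolic rescaling, an ambient isometry, a time translation and a time-independent reparameterization; the breather identity \eqref{breather_1} gives coincidence of consecutive pieces at $\tau=\tau_{j-1}$ (your telescoping computation is exactly the paper's, run in the opposite direction); and smoothness of \eqref{defined_ancient_solution} is reduced to matching derivatives at the junctions. The one place where you genuinely diverge is the treatment of those junction derivatives: the paper computes the one-sided first derivatives explicitly, using how $\vec H$ transforms under the scaling and rotation in \eqref{y_j_definition} together with \eqref{breather_1}, and then asserts that the matching for all orders $k\ge 2$ is ``straightforward to check similarly''; you instead observe that for a solution of $\partial_\tau y=-\vec H$ every time derivative $\partial_\tau^k y$ is determined pointwise by the spatial data of the immersion $y(\cdot,\tau)$, so the fact that the two pieces are literally the same parameterized immersion at $\tau_{j-1}$ forces all one-sided time derivatives (and mixed space-time derivatives) to agree at once. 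This buys a cleaner, order-independent justification of the step the paper leaves implicit, at the cost of invoking the (standard, but worth stating) fact that the evolution equation expresses $\partial_\tau^k y$ as a spatial operator applied to $y(\cdot,\tau)$; your phrase ``universal polynomial in the second fundamental form and its covariant derivatives'' should be read loosely, since the expressions also involve tangential and normal projections of the position data, but all such quantities are determined by the immersion at the fixed time, so the argument goes through.
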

\begin{proof}
	Clearly we only need to check that $y(p ,\tau)$ is smooth at each $\tau_j$.
	Applying (\ref{breather_1}) and (\ref{y_j_definition}), we get
	\begin{align*}
	y_j(p,\tau_{j-1})&=\alpha^{-j}\mathcal R^j y_0\left(\phi^{-j}(p),0\right)+\sum\limits_{k=0}^{j-1}\alpha^{-k}\mathcal R^k V\\
	&
	=\alpha^{-(j-1)}\mathcal R^{j-1} \left(y_0\left(\phi^{-(j-1)}(p),1\right)-V\right)+\sum\limits_{k=0}^{j-1}\alpha^{-k}\mathcal R^k V\\
	&=\alpha^{-(j-1)}\mathcal R^{j-1} y_0\left(\phi^{-(j-1)}(p),\alpha^{2(j-1)}( \tau_{j-1}- \tau_{j-2})\right)+\sum\limits_{k=0}^{j-2}\alpha^{-k}\mathcal R^k V\\
	&=y_{j-1}(p,\tau_{j-1}).
	\end{align*}
By the definition of $y$, we have
	\begin{align*}
	\frac{\partial_+}{\partial\tau}y(p,\tau_{j-1})=\frac{\partial_+}{\partial\tau}y_j(p,\tau_{j-1})=\alpha^{j}\mathcal R^j\frac{\partial_+}{\partial \tau}y_0\left(\phi^{-j}(p),0\right)=-\alpha^{j}\mathcal R^j\vec{H}_0\left(\phi^{-j}(p),0\right),
	\end{align*}
	and 
	\begin{eqnarray*}
	\frac{\partial_-}{\partial \tau}
y(p,\tau_{j-1})&=&\frac{\partial_-}{\partial \tau}
y_{j-1}(p,\tau_{j-1})=\alpha^{j-1}\mathcal R^{j-1}\frac{\partial}{\partial \tau}y_0\left(\phi^{-(j-1)}(p),1\right)
\\
&=&-\alpha^{j-1}\mathcal R^{j-1}\vec{H}_0\left(\phi^{-(j-1)}(p),1\right)=-\alpha^{j}\mathcal R^j\vec{H}_0(\phi^{-j}(p),0).
	\end{eqnarray*}
	Hence, we have
	$$\frac{\partial_+}{\partial \tau}
	y(p,\tau_{j-1})=	\frac{\partial_-}{\partial \tau}
	y(p,\tau_{j-1}).
	$$
	Similarly, it is straightforward to check
	$$\frac{\partial^k_+}{\partial \tau^k}
	y(p,\tau_{j-1})=	\frac{\partial^k_-}{\partial \tau^k}
	y(p,\tau_{j-1})\quad\text{ for all }\quad k\geq 2.$$ This finishes the proof of the lemma.
\end{proof}

One important ingredient of the proof is to show that the ancient solution $y$ has bounded entropy. The first step is to show that this holds for the original breather $x$. For all $\varepsilon\ge0$, we define the backward heat kernel $\Phi_\varepsilon$ as 
\begin{eqnarray}\label{definitionofthekernel}
\tau_0&:=&(\alpha^{-2}-1)^{-1},
\\\nonumber
\Phi_\varepsilon(x,\tau)&:=&(4\pi(\tau+\tau_0+\varepsilon))^{-\frac{n}{2}}\exp\left(-\frac{|x|^2}{4(\tau+\tau_0+\varepsilon)}\right).
\end{eqnarray}

\begin{lem}\label{firstthereisabound}
There exists a positive constant $\bar\varepsilon$ depending only on the constant $\gamma$ in (\ref{integral_bound}), such that for all $\varepsilon\in[0,\bar\varepsilon)$, it holds that 
\begin{eqnarray}\label{boundedentropyin01}
\int_M \Phi_\varepsilon(y(\cdot,\tau),\tau)d\mu_{\tau}\le C\quad\text{ for all }\quad \tau\in[0,1],
\end{eqnarray} where $y$ is defined in (\ref{defined_ancient_solution}), $\mu_\tau$ is the Riemannian measure induced by $y(\cdot,\tau):M\rightarrow\mathbb{R}^{n+m}$, and $C$ is a constant depending only on the quantity in (\ref{integral_bound}). 
\end{lem}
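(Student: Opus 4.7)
The plan is to first bound the integral at the single endpoint $\tau=1$ directly from the hypothesis (\ref{integral_bound}), and then propagate the bound to all $\tau\in[0,1]$ by Huisken's monotonicity (Theorem \ref{Huiskenmonotonicity}). On the interval $[0,1]$ we have $y(p,\tau)=y_0(p,\tau)=x(p,1-\tau)$; setting $t=1-\tau$ and $t_0^{*}:=1+\tau_0+\varepsilon$, one recognises $\Phi_\varepsilon(\cdot,\tau)$ as precisely the backward heat kernel $\Phi_{0,t_0^{*}}(\cdot,t)$ of Theorem \ref{Huiskenmonotonicity}. Since $t_0^{*}>1\geq t$ for every $t\in[0,1]$, this kernel is smooth throughout the forward flow, so Huisken's monotonicity is applicable as soon as finiteness is verified at a single time slice.

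To verify finiteness at $t=0$ (equivalently $\tau=1$), I compute
$$1+\tau_0=1+\frac{\alpha^{2}}{1-\alpha^{2}}=\frac{1}{1-\alpha^{2}},$$
so the strict hypothesis $\gamma<\tfrac{1-\alpha^{2}}{4}=\tfrac{1}{4(1+\tau_0)}$ ensures that
$$\bar\varepsilon\;:=\;\frac{1}{4\gamma}-\frac{1}{1-\alpha^{2}}$$
is strictly positive. For $\varepsilon\in[0,\bar\varepsilon)$ one has $\tfrac{1}{4(1+\tau_0+\varepsilon)}\geq\gamma$, giving the pointwise bound $\Phi_\varepsilon(x(\cdot,0),0)\leq(4\pi(1+\tau_0))^{-n/2}e^{-\gamma|x(\cdot,0)|^{2}}$. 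Integrating and invoking (\ref{integral_bound}) yields
$$\int_M\Phi_\varepsilon(y(\cdot,1),1)\,d\mu_1\;\leq\;(4\pi(1+\tau_0))^{-n/2}\int_M e^{-\gamma|x(\cdot,0)|^{2}}\,d\mu_0\;=:\;C<\infty.$$

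Finally, Theorem \ref{Huiskenmonotonicity} applied with pole $(x_0,t_0)=(0,t_0^{*})$ to the forward flow $x:M\times[0,1]\to\mathbb{R}^{n+m}$ asserts that $\int_M\Phi_{0,t_0^{*}}(x(\cdot,t),t)\,d\mu_t$ is non-increasing in $t$. Reversing time, $\int_M\Phi_\varepsilon(y(\cdot,\tau),\tau)\,d\mu_\tau$ is non-decreasing in $\tau$, so its maximum over $[0,1]$ is attained at $\tau=1$ and the bound of the previous step transfers to every $\tau\in[0,1]$, which is the claim. I do not anticipate any substantive obstacle; the only delicate point is the calibrated choice of $\bar\varepsilon$, which is exactly where the strict inequality on $\gamma$ enters, and it is also the reason Remark \ref{main_rem}(3) can permit the endpoint value $\gamma=\tfrac{1-\alpha^{2}}{4(t_2-t_1)}$ in the translation-free case (where the pointwise weight is respected without needing any slack).
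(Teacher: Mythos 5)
Your proposal is correct and follows essentially the same route as the paper: verify the bound at the single slice $\tau=1$ (forward time $t=0$) by using $1+\tau_0=\tfrac{1}{1-\alpha^2}$ and the strict inequality $\gamma<\tfrac14(1-\alpha^2)$ to choose $\bar\varepsilon>0$ so that $\tfrac{1}{4(1+\tau_0+\varepsilon)}\ge\gamma$, then propagate to all $\tau\in[0,1]$ via Theorem \ref{Huiskenmonotonicity} applied to the kernel $\Phi_{0,\,1+\tau_0+\varepsilon}$. Your explicit identification of $\Phi_\varepsilon$ with the backward heat kernel under the time reversal $t=1-\tau$, and the explicit formula for $\bar\varepsilon$, merely make precise what the paper leaves implicit.
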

\begin{proof}
By Theorem \ref{Huiskenmonotonicity}, we need only to show that (\ref{boundedentropyin01}) holds at $\tau=1$. Since $\gamma<\tfrac{1}{4}(1-\alpha^2)$, by (\ref{definitionofthekernel}), we have
$$\frac{1}{4(1+\tau_0+\varepsilon)}=\frac{1-\alpha^2}{4(1+\varepsilon(1-\alpha^2))}\geq\gamma,$$
if $\bar\varepsilon$ is taken to be small enough. It then follows from (\ref{integral_bound}) that (\ref{boundedentropyin01}) holds when $\tau=1$. The lemma then follows from Theorem \ref{Huiskenmonotonicity}.
\end{proof}

\begin{lem}\label{integralalwaysbound}
There is a constant $C$, such that
\begin{eqnarray}
\int_M \Phi(y(\cdot,\tau),\tau)d\mu_\tau\leq C\quad\text{ for all }\quad \tau> 0,
\end{eqnarray}
where $y$, $\mu_\tau$ are as defined before, and
\begin{eqnarray}\label{HKS}
\Phi(x,\tau):=\frac{1}{(4\pi\tau)^{\frac{n}{2}}}e^{-\frac{|x|^2}{4\tau}}.\end{eqnarray}
\end{lem}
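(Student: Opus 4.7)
The plan is to exploit the self-similar splicing defining $y$ together with Huisken's monotonicity (Theorem \ref{Huiskenmonotonicity}) to identify, on each interval $[\tau_{j-1},\tau_j]$, the quantity $\int_M \Phi(y(\cdot,\tau),\tau)\,d\mu_\tau$ with an integral of a bona fide Huisken backward heat kernel applied to the original breather $y_0$ over $[0,1]$. Once this identification is in place, the required uniform bound will follow from one application of Huisken's monotonicity and a Gaussian comparison that absorbs everything into the hypothesis (\ref{integral_bound}).

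For $\tau\in[\tau_{j-1},\tau_j]$ with $j\geq 1$, I would perform the substitution $q=\phi^{-j}(p)$. Since the induced metric of $y_j$ equals $\alpha^{-2j}$ times the $\phi^{-j}$-pullback of that of $y_0$ (rotations and translations being isometries), the measure transforms by a factor of $\alpha^{-jn}$. Writing $\alpha^{-j}\mathcal R^j y_0+W_j = \alpha^{-j}\mathcal R^j(y_0-w_j)$ with $w_j:=-\alpha^j\mathcal R^{-j}W_j$ yields $|\alpha^{-j}\mathcal R^j y_0(q,s)+W_j|^2 = \alpha^{-2j}|y_0(q,s)-w_j|^2$, while the measure factor $\alpha^{-jn}$ combines with $(4\pi\tau)^{-n/2}$ to produce $(4\pi\alpha^{2j}\tau)^{-n/2}$. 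A short computation then yields $\alpha^{2j}\tau = s + \tau_0(1-\alpha^{2j})$, where $s:=\alpha^{2j}(\tau-\tau_{j-1})\in[0,1]$ and $\tau_0=(\alpha^{-2}-1)^{-1}$ is the shift from (\ref{definitionofthekernel}). The outcome is the exact identity
\begin{equation*}
\int_M \Phi(y_j(\cdot,\tau),\tau)\,d\mu_\tau \;=\; \int_M \Phi_{w_j,\,\tau_0(1-\alpha^{2j})}(y_0(\cdot,s),-s)\,d\mu_s^{(y_0)}.
\end{equation*}

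Since the forward-time singularity $\tau_0(1-\alpha^{2j})>0$ lies strictly after the interval $[-1,0]$ (in forward time), Theorem \ref{Huiskenmonotonicity} applied to $y_0$ bounds the right-hand side by its value at $s=1$. The point $w_j=-\sum_{m=1}^{j}\alpha^m\mathcal R^{-m}V$ is uniformly bounded by a geometric series, and $\tau_0(1-\alpha^{2j})+1\leq\tau_0+1=(1-\alpha^2)^{-1}$. Combining the elementary inequality $|y_0-w_j|^2 \geq (1-\delta)|y_0|^2 - (\delta^{-1}-1)|w_j|^2$ with $\delta>0$ chosen small enough that $(1-\delta)(1-\alpha^2)/4 \geq \gamma$---possible because $\gamma<(1-\alpha^2)/4$---dominates the resulting exponential by a constant multiple of $\exp(-\gamma|y_0(\cdot,1)|^2)$, which is integrable by hypothesis. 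The interval $\tau\in(0,1]$ (the $j=0$ case) is handled in the same spirit: one applies Theorem \ref{Huiskenmonotonicity} directly to the kernel $\Phi(\cdot,\tau)=\Phi_{0,0}(\cdot,-\tau)$ on $y_0$ to reduce to $\tau=1$, and then uses $1/4>\gamma$ to absorb into (\ref{integral_bound}).

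The main obstacle is the bookkeeping in the reduction: one must see that the three sources of $j$-dependence---the volume factor $\alpha^{-jn}$, the factor $\alpha^{-2j}$ inside the Gaussian, and the time reparametrization $\alpha^{2j}\tau$---conspire to produce a genuine Huisken kernel whose center $w_j$ and singularity time $\tau_0(1-\alpha^{2j})$ both remain uniformly bounded as $j\to\infty$. Once this is recognized, the remainder of the argument is a routine Gaussian estimate and a single invocation of Theorem \ref{Huiskenmonotonicity} on the fixed time interval $[0,1]$.
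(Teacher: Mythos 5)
Your proposal is correct, and it rests on the same basic mechanism as the paper's proof: unwinding the splice by the reparametrization $\phi^{\pm j}$, the scaling identity $\alpha^{2j}\tau=\tilde\tau+(1-\alpha^{2j})\tau_0$, the geometric-series bound on the accumulated translation vector, and the room provided by $\gamma<\tfrac14(1-\alpha^2)$. The difference is organizational, but genuine. The paper first discards the translation pointwise via Young's inequality, $|y_j|^2\ge\alpha^{-2j}\big((1-\delta)|y_0|^2-C_\delta\big)$, so that the integrand is dominated by a constant times the \emph{origin-centered, fattened} kernel $\Phi_{\varepsilon(\delta)}$, and then quotes the uniform bound over $\tilde\tau\in[0,1]$ from Lemma \ref{firstthereisabound}, which packages Huisken's monotonicity for that centered family. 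You instead keep the change of variables as an exact identity, recognizing $\int_M\Phi(y_j(\cdot,\tau),\tau)\,d\mu_\tau$ as the Huisken functional of $y_0$ with the off-center kernel $\Phi_{w_j,\,\tau_0(1-\alpha^{2j})}$, apply Theorem \ref{Huiskenmonotonicity} for that kernel to reduce to the single slice $s=1$ (i.e.\ $x(\cdot,t_1)$), and only there absorb the bounded center $w_j$ into the hypothesis (\ref{integral_bound}) by the $(1-\delta)$ Gaussian comparison. This buys a proof that bypasses Lemma \ref{firstthereisabound} and the $\varepsilon$-family altogether, reducing everything to the one time slice where the hypothesis is stated, at the cost of invoking the monotonicity for a $j$-dependent family of centers and singular times; the paper's route factors the argument so the monotonicity input is quoted once, uniformly in $\tilde\tau$, through Lemma \ref{firstthereisabound}. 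One ordering remark rather than a gap: to invoke Theorem \ref{Huiskenmonotonicity} for $\Phi_{w_j,\,\tau_0(1-\alpha^{2j})}$ you must first verify finiteness at the initial slice $s=1$; your Gaussian comparison there does exactly this, uniformly in $j$, since $|w_j|\le\alpha|V|/(1-\alpha)$ and $1+\tau_0(1-\alpha^{2j})\le(1-\alpha^2)^{-1}$, so the comparison should simply be stated before, not after, the appeal to monotonicity.
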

\begin{proof}
Let us fix an arbitrary $\tau> 0$ and assume $\tau\in[\tau_{j-1},\tau_j]$ for some $j\in\mathbb{N}$. Without loss of generality, we may assume $j>0$, for otherwise we already have the bound (\ref{boundedentropyin01}) for $\tau\in(0,1]$. Then $y(\cdot,\tau)=y_j(\cdot,\tau)$, where the latter is defined in (\ref{y_j_definition}). Since a time-independent reparametrization does not affect the mean curvature flow equation or the integral, we may instead consider 
\begin{eqnarray*}
y_j(\phi^j(\cdot),\tau)&=&\alpha^{-j}\mathcal R^j y_0\left(\cdot,\alpha^{2j}( \tau- \tau_{j-1})\right)+\sum\limits_{k=0}^{j-1}\alpha^{-k}\mathcal R^k V
\\
&:=&\alpha^{-j}\left(\mathcal R^j y_0\left(\cdot,\tilde\tau\right)+v_i\right),
\end{eqnarray*}
where we have defined 
\begin{eqnarray}
\tilde\tau&:=&\alpha^{2j}( \tau- \tau_{j-1})\in[0,1],
\\\nonumber
v_i&:=&\alpha^j\sum\limits_{k=0}^{j-1}\alpha^{-k}\mathcal R^k V\in\mathbb{R}^{n+m}.
\end{eqnarray}
It is easy to verify by straightforward computation that $|v_j|\leq c_0$ for some constant $c_0$ independent of $j$. Hence, letting $\delta$ be a small positive constant to be fixed, we have
$$|y_j(\phi^j(\cdot),\tau)|^2\geq \alpha^{-2j}\Big((1-\delta)|y_0\left(\cdot,\tilde\tau\right)|^2-C_\delta\Big),$$ where $C_\delta$ is a constant depending on $c_0$ and $\delta$, but independent of $j$.

Finally, we have
\begin{align*}
&\int_M\Phi(y(\cdot,\tau),\tau)d\mu_\tau
\\
\leq\, &\int_M\frac{1}{(4\pi(\alpha^{-2j}\tilde\tau+\tau_{j-1}))^{\frac{n}{2}}}\exp\left(-\frac{\alpha^{-2j}\Big((1-\delta)|y_0\left(\cdot,\tilde\tau\right)|^2-C_\delta\Big)}{4(\alpha^{-2j}\tilde\tau+\tau_{j-1})}\right)\alpha^{-nj}d\mu_{\tilde\tau}
\\
\leq\, & C_\delta\int_M\frac{1}{(4\pi(\tilde\tau+(1-\alpha^{2j})\tau_0))^{\frac{n}{2}}}\exp\left(-\frac{(1-\delta)|y_0\left(\cdot,\tilde\tau\right)|^2}{4(\tilde\tau+(1-\alpha^{2j})\tau_0)}\right)d\mu_{\tilde\tau}
\\
\leq\, & C_\delta\int_M\frac{1}{(4\pi(\tilde\tau+\tau_0+\varepsilon(\delta)))^{\frac{n}{2}}}\exp\left(-\frac{|y_0\left(\cdot,\tilde\tau\right)|^2}{4(\tilde\tau+\tau_0+\varepsilon(\delta))}\right)d\mu_{\tilde\tau}
\\
=\, & C_\delta\int_M \Phi_{\varepsilon(\delta)}(y(\cdot,\tilde\tau),\tilde\tau)d\mu_{\tilde\tau}.
\end{align*}
Note that in the above computation the constant $C_\delta$ varies from line to line, and that $\varepsilon(\delta)$ depends only on $\delta$ and satisfies $\varepsilon(\delta)\searrow 0$ as $\delta\searrow 0$. Therefore, taking $\delta$ small enough such that $\varepsilon(\delta)<\bar\varepsilon$, the conclusion follows from (\ref{boundedentropyin01}).
\end{proof}

Let us scale the ancient solution $y:M\times[0,\infty)\to\mathbb{R}^{n+m}$ with the factors $\left\{\tau_j^{-\frac{1}{2}}\right\}_{j=1}^\infty$ and obtain the sequence of mean curvature flows $\{\tilde y_j(p,\tau)\}_{j=1}^\infty$. More precisely, we define 
$$\tilde y_j(p,\tau):= \tau_j^{-\frac{1}{2}}y(\phi^{j+1}(p),\tau_j\tau)\quad\text{ for  }\quad \tau\in \left[1,\tfrac{\tau_{j+1}}{\tau_j}\right],$$
where $y$ is defined in (\ref{defined_ancient_solution}). Note that the reparametrization does not affect anything. Then we have
\begin{equation}\label{estimate_1}
\tilde{y}_j(p,\tau)=
\tau_j^{-\frac{1}{2}}\alpha^{-(j+1)}\mathcal R^{j+1} y_0\left(p,\alpha^{2(j+1)}\tau_j(\tau-1)\right)+\tau_j^{-\frac{1}{2}}\sum\limits_{k=0}^{j}\alpha^{-k}\mathcal R^k V
\end{equation}
for all $\tau\in \left[1,\frac{\tau_{j+1}}{\tau_j}\right]$. Here we remark that
$$\frac{\tau_{j+1}}{\tau_j}\searrow\alpha^{-2}.$$ 
By the definition  (\ref{y_j_definition}) of $\tau_j$, we estimate
\begin{align}\label{estimate_3}
\left|\tau_j^{-\frac{1}{2}}\sum\limits_{k=0}^{j}\alpha^{-k}\mathcal R^k V\right|&\le \tau_j^{-\frac{1}{2}}\sum\limits_{k=0}^{j}\alpha^{-k} |V|\le c|V|,
\end{align}
where $c$ is a constant independent of $j$. Since the scaling factors in (\ref{estimate_1}) are all controlled, that is, 
$$\alpha^{2(j+1)}\tau_j\rightarrow c_0:=(\alpha^{-2}-1)^{-1}, $$
we have that $\tilde y_j$ and $y_0$ differ only by a rotation, a scaling with a controlled factor, and a transportation by a controlled vector. Next, since $O(n+m)$ is a compact Lie group, after passing to a subsequence, we have 
$$\mathcal R^{j+1}\rightarrow \mathcal R^\infty\in O(n+m),\quad \tau_j^{-\frac{1}{2}}\sum\limits_{k=0}^{j}\alpha^{-k}\mathcal R^k V\rightarrow V^\infty\in\mathbb{R}^{n+m}.$$ If we write
$$\tilde y_\infty\big(p,\tau)=c_0^{-\frac{1}{2}}\mathcal R^\infty y_0(p,c_0(\tau-1)\big)+V^\infty\quad\text{ for all }\quad (p,\tau)\in M\times[1,\alpha^2],$$
then it is trivial to see that $\tilde y_j\to \tilde y_\infty$ pointwise smoothly, that is,
$$\nabla^m\tilde y_j(p,\tau)\to \nabla^m\tilde y_\infty (p,\tau)\quad \text{ for all }\quad m\in\mathbb{N} \quad\text{ and }\quad (p,\tau)\in M\times[1,\alpha^2].$$
Now, it remains only to show that $\tilde y_\infty$ is a critical point of Huisken's monotonicity formula.

Let $\Phi:\mathbb{R}^{n+m}\times(0,\infty)\to\mathbb{R}$ be as defined in (\ref{HKS}). We shall use the notations $\tilde\mu_{j,\bar\tau}$ to denote the Riemannian measure induced by $\tilde y_j(\cdot,\bar\tau)$. For any $1\leq\bar\tau_1<\bar\tau_2\leq\alpha^{-2}$, we have
\begin{align*}
&\int_M\Phi(\tilde y_j(\cdot,\bar\tau_2),\bar\tau_2)d\tilde\mu_{j,\bar\tau_2}-\int_M\Phi(\tilde y_j(\cdot,\bar\tau_1),\bar\tau_1)d\tilde\mu_{j,\bar\tau_1}
\\
=\,&\int_M\Phi(y(\cdot,\bar\tau_2\tau_j),\bar\tau_2\tau_j)d\mu_{\bar\tau_2\tau_j}-\int_M\Phi(y(\cdot,\bar\tau_1\tau_j),\bar\tau_1\tau_j)d\mu_{\bar\tau_1\tau_j}
\\
\to\, &0\quad\text{ as} \quad j\to \infty,
\end{align*}
where we have used the boundedness (Lemma \ref{integralalwaysbound}) and the monotonicity (Theorem \ref{Huiskenmonotonicity}) of $\int_M\Phi(y(\cdot,\tau),\tau)d\mu_{\tau}$.  Since
$$\left|\vec{\tilde {H_j}}+\frac{\tilde y_j^\perp(\cdot,\tau)}{2\tau}\right|^2\Phi(\tilde y_j(\cdot,\tau),\tau)d\tilde\mu_{j,\tau}\to \left|\vec{\tilde {H}}_\infty+\frac{\tilde y_\infty^\perp(\cdot,\tau)}{2\tau}\right|^2\Phi(\tilde y_\infty(\cdot,\tau),\tau)d\tilde\mu_{\infty,\tau}$$
pointwise on $M\times[1,\alpha^2]$, Fatou's lemma implies
\begin{align*}
&\int_{\bar\tau_1}^{\bar\tau_2}\int_M\left|\vec{\tilde {H}}_\infty+\frac{\tilde y_\infty^\perp(\cdot,\tau)}{2\tau}\right|^2\Phi(\tilde y_\infty(\cdot,\tau),\tau)d\tilde\mu_{\infty,\tau}d\tau
\\
\leq\,&\liminf_{j\rightarrow\infty}\int_{\bar\tau_1}^{\bar\tau_2}\int_M\left|\vec{\tilde {H}}_j+\frac{\tilde y_j^\perp(\cdot,\tau)}{2\tau}\right|^2\Phi(\tilde y_j(\cdot,\tau),\tau)d\tilde\mu_{j,\tau}d\tau
\\
\leq\, &\liminf_{j\rightarrow\infty} \left(\int_M\Phi(\tilde y_j(\cdot,\bar\tau_2),\bar\tau_2)d\tilde\mu_{j,\bar\tau_2}-\int_M\Phi(\tilde y_j(\cdot,\bar\tau_1),\bar\tau_1)d\tilde\mu_{j,\bar\tau_1}\right)
\\
=\,&0.
\end{align*}
Hence, $\tilde y_\infty$ satisfies $$\vec{\tilde {H}}_\infty+\frac{\tilde y_\infty^\perp(\cdot,\tau)}{2\tau}=0.$$ This finishes the proof of Theorem  Theorem \ref{main}.

\begin{rem}\label{remarkmm}
	The no shrinking breather theorem in the closed case is proved in \cite[Theorem 2.3]{MM} by using the supremum of Huisken's functional. 
	For the sake of completeness, we give their proof below.
	 For a closed mean curvature flow $x:M^n\times I\rightarrow\mathbb{R}^{n+m}$, let us define
	$$
	H_{x_{0}, T}\left(M_t\right)=\int_M\frac{1}{(4\pi T)^{-\frac{n}{2}}}\exp\left(-\frac{|x(\cdot,t)-x_0|^2}{4T}\right)d\mu_t,\quad t\in  I,
	$$	
	and
	\begin{equation}
\lambda_{t_0}(t)=\sup\limits_{x_0}H_{x_{0},t_0-t}\left(M_t\right),\quad t\in(-\infty,t_0)\cap I.
	\end{equation}
	Since $M$ is closed, $H_{x_{0}, T}\left(M_t\right)\to 0$ for as $x_0\to \infty$. Hence $\lambda_{t_0}(t)$ can be achieved for any $t$. Moreover,	
	Huisken's monotonicity formula implies that $\lambda_{t_0}(t)$ is finite and is monotonically non-increasing in $t$. If $x$ is also a shrinking breather, letting $t_1<t_2$, and $\alpha$ be as in the statement of Definition \ref{def_breather}, we may set $t_0=\frac{t_2-\alpha^{2}t_1}{1-\alpha^{2}}$. Then
	$t_0-t_2=\alpha^{2}(t_0-t_1)$. Assume $x_2\in\mathbb{R}^{n+m}$ is the point where $\lambda_{t_0}(t_2)$ is attained, that is, $\lambda_{t_0}(t_2)=H_{x_2,t_0-t_2}(M_{t_2})$, then
\begin{align*}
\lambda_{t_0}(t_2)&=H_{x_{2},t_0- t_2}\left(M_{t_2}\right)\le H_{x_{2},t_0- t_1}\left(M_{t_1}\right)=H_{x_{2},\alpha^{-2}(t_0-t_2)}\left(\alpha^{-1} F\circ M_{t_2}\right)\\
&=H_{F^{-1}(\alpha x_2),t_0-t_2}\left(  M_{t_2}\right)\le \lambda_{t_0}(t_2),
\end{align*}	 
where $F:\mathbb{R}^{n+m}\to\mathbb{R}^{n+m}$ is an isometry. It follows that $H_{x_{2},t_0- t_1}\left(M_{t_1}\right)=H_{x_{2},t_0- t_2}\left(M_{t_2}\right)$, and hence $x$ is a self-shinker.	
\end{rem}

\section{Proof of Theorem \ref{main3}}

After parabolic scaling and translating in time, we let $x_0:M^n\times[0,1]\to \mathbb{R}^{n+1}$ be the   expanding breather in the statement of Theorem \ref{main3}, where $0$ and $1$ correspond to $t_1$ and $t_2$ in Definition \ref{def_breather}, respectively. Then there exists $\alpha>1$, an isometry $F:\mathbb{R}^{n+1}\to \mathbb{R}^{n+1}$, and a diffeomorphism $\phi:M\to M$, such that
$$
x_0(p,1)=\alpha F\circ x_0(\phi(p),0)\quad\text{ for all }\quad p\in M.
$$
As before, we can find an orthogonal matrix $\mathcal R\in O(n+1)$ and a constant vector $V\in\mathbb{R}^{n+1}$, such that
\begin{equation}\label{breather_x}
	x_0(p,1)=\alpha F\big(  x_0(\phi(p),0)\big)=\alpha \mathcal R x_0(\phi(p),0)+V\quad\text{ for all }\quad p\in M.
\end{equation}

We follow the same idea as in the proof of Theorem \ref{main} and define an immortal solution. Precisely, for each $j\geq 0$, let
\begin{eqnarray}\label{x_j_definition}
	\displaystyle t_j&=&\sum^j_{k=0}\alpha^{2k},
	\\\nonumber
	x_j(p,t)&=&\alpha^{j}\mathcal R^j x_0\big(\phi^j(p),\alpha^{-2j}( t- t_{j-1})\big)+\sum\limits_{k=0}^{j-1}\alpha^{k}\mathcal R^k V,\ \  t\in [ t_{j-1}, t_j].
\end{eqnarray}
Obviously, we have $t_j\nearrow \infty$. By the same argument as in the proof of Lemma \ref{TypeIancientsolution}, we also have that the following spliced immortal solution is smooth.
\begin{eqnarray}\label{defined_immortal_solution}
	x(p ,t)=\left\{
	\begin{array}{ll}
		x_0(p ,t), \quad &  t\in [0,1], \\
		x_j(p ,t), \quad & t\in [t_{j-1}, t_j].
	\end{array}
	\right.
\end{eqnarray}

As in the proof of Theorem \ref{main}, we consider the rescaled sequence $\{\tilde x_j\}_{j=1}^\infty$ defined as
\begin{equation}\label{def_immortal}
\tilde{x}_j(p,t):= t_j^{-\frac{1}{2}}x\big(\phi^{-(j+1)}(p),t_j t\big)\quad\text{ for }\quad t\in \left[1,\tfrac{t_{j+1}}{t_j}\right].
\end{equation}
Then
\begin{equation}\label{estimate_11}
	\tilde{x}_j(p,t)=
	t_j^{-\frac{1}{2}}\alpha^{j+1}\mathcal R^{j+1} x_0\big(p,\alpha^{-2(j+1)}t_j(t-1)\big)+t_j^{-\frac{1}{2}}\sum\limits_{k=0}^{j}\alpha^{k}\mathcal R^k V.
\end{equation}
By (\ref{x_j_definition}), we have
\begin{align}\label{estimate_3}
	\left|t_j^{-\frac{1}{2}}\sum\limits_{k=0}^{j}\alpha^{k}\mathcal R^k V\right|&\le t_j^{-\frac{1}{2}}\sum\limits_{k=0}^{j}\alpha^{k} |V|\le c|V|,
\end{align}
where $c$ is a constant independent of $j$, and the scaling factor in \ref{estimate_11} is controlled, that is
\begin{align}\label{estimate_4}
	\alpha^{-2(j+1)}t_j\to c_0:=(\alpha^2-1)^{-1}.
\end{align}

From the computation above, we see that $\tilde x_j$ and $x_0$ differ only by a rotation, and a scaling with a controlled factor, and a transportation by a controlled vector. Hence, all the conditions of \cite[Theorem 11.12]{ACGL} are satisfied (it is obvious that this theorem holds true for noncompact mean curvature flows with higher codimension also, the reader can check this point easily). Note that the local area bound and the properness of immersion are both provided by the convexity assumption. Hence, we may obtain a mean curvature flow $\tilde x_\infty: M_\infty\times(1,\alpha^2]\to\mathbb{R}^{n+1}$ such that a subsequence of $\{\tilde x_j\}_{j=1}^\infty$ converges to $\tilde x_\infty$ on compact sets of $\mathbb{R}^{n+1}\times\mathbb{R}$ (see \cite[Definition 11.10]{ACGL}). By the compactness of $O(n+1)$, and by (\ref{estimate_3}), me may find $\mathcal R^\infty$ and $V_\infty$, such that, after passing to a further subsequence, we have 
$$\mathcal R^{j+1}\to \mathcal R^\infty\quad\text{ and }\quad t_j^{-\frac{1}{2}}\sum\limits_{k=0}^{j}\alpha^{k}\mathcal R^k V\to V_\infty.$$
Hence we have
$$M_\infty=M^n\quad\text{ and }\quad \tilde x_\infty(p,t)=c_0^{-\frac{1}{2}}\mathcal R_\infty x_0(p,c_0(t-1))+V_\infty.$$
It remains to show that $\tilde x_\infty$ is a self-expander.

In the argument below, if the notation is with tilde, then we are referring to the scaled mean curvature flow $\tilde x_j$, otherwise we are referring to the immortal solution $x$. Since
$x_0$ has bounded mean curvature $C$, we have
$$
\sup\limits_{p\in M}|H|(p,t)\le \alpha^{-(j+1)}C \le \frac{C}{\sqrt{t}}\quad\text{ for all }\quad t\in [t_j,t_{j+1}].
$$
It follows that
\begin{eqnarray}\label{ThetypeIIbbound}
\sup\limits_{p\in M}|H|(p,t) \le \frac{C}{\sqrt{t}}\quad\text{ for all }\quad t>0.
\end{eqnarray}
Let us fix a point $p_0\in M^n$. Then, for all $j\geq 1$, we have
$$\big|\,x(p_0,t)-x(p_0,1)\,\big|\leq \int_1^{t_{j+1}}|H(p_0,t)|dt\leq\int_1^{t_{j+1}}C t^{-\frac{1}{2}}\leq C t_j^{\frac{1}{2}}\quad\text{ for all }\quad t\in[t_j,t_{j+1}].$$
Hence, by (\ref{def_immortal}), we have
\begin{eqnarray*}
\big|\,\tilde x_j(\phi^{j+1}(p_0),t)\,\big|\leq C\quad\text{ for all }\quad t\in\left[1,\tfrac{t_{j+1}}{t_j}\right],
\end{eqnarray*}
where $C$ is a constant independent of $j$. It then follows from the definition of convergence on compact sets of $\mathbb{R}^{n+1}\times\mathbb{R}$ that, after passing to a further subsequence, we can find a point $p_\infty\in M$ satisfying
\begin{eqnarray}
\nabla^m\tilde x_j(\phi^{j+1}(p_0),t)\to\nabla^m\tilde x_\infty(p_\infty,t)\quad\text{for all}\quad m\in\mathbb{N}\quad\text{ and }\quad t\in(1,\alpha^2].
\end{eqnarray}

Recall Hamilton's matrix Harnack estimate for weak convex mean curvature flow,
\begin{eqnarray}\label{harnack}
\frac{\partial H}{\partial t}+\frac{H}{2t}+2V_i\nabla_i H+h_{ij}V_iV_j\geq 0,
\end{eqnarray}
where $V$ is any vector field on $M$. From Hamilton's Harnack estimate, we have that  $\sqrt{t}H(p_0,t)$ is non-decreasing. Furthermore, by (\ref{ThetypeIIbbound}), $\sqrt{t}H(p_0,t)$ is bounded from above. Hence we may compute
$$\sqrt{t}\tilde H_{\infty}(p_\infty,t)=\lim\limits_{j\to\infty}\sqrt{t}\tilde H_j(\phi^{j+1}(p_0),t)=\lim\limits_{j\to\infty}\sqrt{t_jt}H(p_0,t_jt)=\operatorname{const}\quad\text{ for }\quad t\in(1,\alpha^2].$$
In other words, we have
\begin{eqnarray}\label{harnackcritical}
\frac{\partial}{\partial t}\left(\sqrt{t}\tilde H_\infty(p_\infty,t)\right)=0\quad\text{ for all }\quad t\in(1,\alpha^2].
\end{eqnarray}
Without loss of generality, we assume that $\tilde x_{\infty}$ is strictly convex, for otherwise, by the strong maximum principle,
$M_{\infty}$ splits as $N_{\infty}^k\times \mathbb{R}^{n-k}$ with
$N_{\infty}^k$ being strictly covex, and we may work with $N_{\infty}^k$ instead. Taking $V_j=-(\tilde{h}_{\infty})^{-1}_{ ij}\nabla_j\tilde{H}_{\infty}$ in (\ref{harnack}), we get $\nabla\tilde H_\infty(p_\infty,t)=0$.
It then follows from \cite[Theorem A.2]{C} and (\ref{harnackcritical}) that $\tilde x_\infty$ is a self-expander. This finishes the proof of Theorem \ref{main3}.

\section{Proof of Theorem \ref{main5}}

The proof of Theorem \ref{main5} is similar to that of Theorem \ref{main3}. We first of all construct an ancient solution out of the steady breather.
Let $x_0:M^n\times [0,1]\to \mathbb{R}^{n+m}$ be the  steady  breather as described in the statement of Theorem \ref{main5}. Let $F:\mathbb{R}^{n+1}\to\mathbb{R}^{n+1}$ and $\phi:M\to M$ be the isometry and self-diffeomorphism in Definition \ref{def_breather}, respectively. Then we have
\begin{equation}\label{the_steady_breather}
	x_0(p,1)= F\circ x_0(\phi(p),0)\quad \text{ for all }\quad p\in M.
\end{equation}
Let us define
\begin{eqnarray}\label{translationinj}
x_{-j}(p,t)=F^{-j}\circ x_0\big(\phi^{-j}(p),t+j\big)\quad\text{ for }\quad t\in[-j,-j+1]
\end{eqnarray}
and the ancient solution
\begin{eqnarray}\label{ancientsteady}
x:M^n\times(-\infty,1]\rightarrow \mathbb{R}^{n+1},\quad \quad x(\cdot,t)=x_{-j}(\cdot,t)\quad\text{ for }\quad t\in[-j,-j+1].
\end{eqnarray}
We then consider the two cases in Theorem \ref{main5}.
\\

\noindent (1) We assume $\max_M H(\cdot,0)$ is attained at $p_0$, here $H$ is the mean curvature of $x_0(\cdot,0)$. We shall also use $H$ to denote the mean curvature of $x$, since this does not cause any ambiguity. By (\ref{the_steady_breather}), since $F$ is an isometry, we have that $H(\cdot,1)=H(\phi(\cdot),0)$. It then follows that 
$$\max_M H(\cdot,1)= H(\phi^{-1}(p_0),1)=\max_M H(\cdot,0)=H(p_0,0).$$
On the other hand, Hamilton's Harnack \cite{RH} implies that
\begin{equation}\label{ancient_harnack}
\frac{\partial H}{\partial t}+2V_i\nabla_iH+h_{ij}V_iV_j\geq0,
\end{equation}
where $V$ is any vector field on $M$. Consequently we have $\partial_t H(\cdot,t)\geq 0$, and hence
\begin{gather*}
H(p_0,0)\leq H(p_0,1)\leq \max_{M}H(\cdot,1)=H(p_0,0),
\\
\sup_{M\times(-\infty,1]}H\leq \max_{M}H(\cdot,1).
\end{gather*}
It follows that the space-time maximum of $H$ on $M\times(-\infty,1)$ is attained at $(p_0,0)$. We may without loss of generality assume that $x$ is strictly convex, for otherwise it splits as $N^{n-k}\times \mathbb{R}^k$ by the strong maximum principle, and we can consider $N^{n-k}$ instead. Then the computation in \cite[Theorem B]{RH} implies that $x$ is a translator.
\\

\noindent (2) In our proof, we shall consider the case where the steady breather satisfies
\begin{eqnarray}\label{bound_condition_1}
\sup_{j\in\mathbb{N}}\left|\,  x(\phi^{-j}(p_0),0) \, \right|<\infty,
\end{eqnarray}
since the other case is parallel, and will be briefly described at the end of the proof. Since $F:\mathbb{R}^{n+1}\to\mathbb{R}^{n+1}$ in (\ref{the_steady_breather}) is an isometry, we can find $\mathcal R\in O(n+1)$ and $V\in\mathbb{R}^{n+1}$, such that (\ref{the_steady_breather}) becomes
\begin{equation}\label{the_steady_breather_2}
	x_0(p,1)= \mathcal R x_0(\phi(p),0)+V\quad \text{ for all }\quad p\in M.
\end{equation}
Consequently, (\ref{translationinj}) becomes
\begin{eqnarray}\label{somethingbackward}
x_{-j}(p,t)=\mathcal R^{-j}x_0\big(\phi^{-j}(p),t+j\big)-\sum_{k=1}^{j}\mathcal R^{-k}V\quad\text{ for }\quad t\in[-j,-j+1].
\end{eqnarray}

We shall consider the ancient solution defined in (\ref{ancientsteady}). Denoting $\tilde x_j(p,t):=x\big(\phi^j(p),t-j\big)+\sum_{k=1}^{j}\mathcal R^{-k}V$ for $t\in[-1,1]$, then we have
\begin{align*}
\tilde x_j(p,t)=\left\{\begin{array}{ll}
\mathcal R^{-j}x_0(p,t), & t\in[0,1],
\\
\mathcal R^{-j-1}x_0\big(\phi^{-1}(p),t+1\big)-\mathcal R^{-j-1}V,& t\in[-1,0].
\end{array}\right.
\end{align*}
Since $|\mathcal R^{-j-1} V|\equiv |V|$ for all $j\le 0$, we may also apply \cite[Theorem 11.12]{ACGL} as we have done in the proof of Theorem \ref{main3}. After passing to a subsequence, the sequence of mean curvature flows $\{\tilde x_j\}_{j=1}^\infty$ converges to $\tilde x_\infty:M^n_\infty\times (-1,1]\rightarrow\mathbb{R}^{n+1}$ on compact sets of $\mathbb{R}^{n+1}\times\mathbb{R}$ (see \cite[Definition 11.10]{ACGL}), with
\begin{gather*}
M_\infty=M,
\\
\tilde x_\infty(\cdot,t) =\mathcal R^\infty x_0(\cdot,t)\quad\text{ for all }\quad t\in[0,1],
\end{gather*}
for some $\mathcal R^\infty\in O(n+1)$. It remains to show that $\tilde x_\infty$ is a translator.

By our assumption (\ref{bound_condition_1}), we have $$\sup_j\left|\,\tilde x_j(\phi^{-j}(p_0),0)\,\right|<\infty.$$ Then we can find a $p_\infty\in M$, such that $\nabla^m\tilde x_j(\phi^{-j}(p_0),0)\to \nabla^m\tilde x_\infty(p_\infty,0)$ for all $m\in\mathbb{N}$, and therefore
\begin{eqnarray*}
\frac{\partial}{\partial t}\tilde H_\infty(p_\infty,0)=\lim_{j\to\infty}\frac{\partial}{\partial t}\tilde H_j(\phi^{-j}(p_0),0)=\lim_{j\to\infty}\frac{\partial}{\partial t} H(p_0,-j)=0,
\end{eqnarray*}
where the last equality is because of Hamilton's Harnack estimate $\partial_t H(p_0,t)\geq 0$ for the ancient solution defined in (\ref{ancientsteady}) and the fact that $H\geq 0$.
 We may without loss of generality assume that $\tilde x_\infty$ is strictly convex as above. Taking $V_j=-(\tilde{h}_{\infty})^{-1}_{ ij}\nabla_j\tilde{H}_{\infty}$ in (\ref{ancient_harnack}), we get $\nabla\tilde H_\infty(p_\infty,0)=0$.
 The rest of the proof is not different from that of Theorem \ref{main3}; the computation is the same as \cite{RH}. This finishes the proof of Theorem \ref{main5}.

Concerning the case where 
\begin{eqnarray}\label{bound_condition_2}
\sup_{j\in\mathbb{N}}\left|\,  x(\phi^{j}(p_0),0) \, \right|<\infty
\end{eqnarray}
 holds, we may define 
\begin{eqnarray*}
x_{j}(p,t)=F^{j}\circ x_0\big(\phi^{j}(p),t-j\big)\quad\text{ for }\quad t\in[j,j+1],
\end{eqnarray*}
and consider the eternal solution
\begin{eqnarray*}
x(p,t)=\left\{\begin{array}{ll}
x_{-j}(p,t) & t\in[-j,-j+1],
\\
x_{j}(p,t) & t\in[j,j+1],
\end{array}\right.
\end{eqnarray*}
for all $j\in\mathbb{N}$, where $x_{-j}$ is defined in (\ref{translationinj}) and (\ref{somethingbackward}). Obviously, on this eternal solution Hamilton's Harnack is still true. Then, writing 
$$x_j(p,t)=\mathcal R^jx_0(\phi^j(p),t-j)+\sum_{k=0}^{j-1}\mathcal R^k V\quad\text{ for }\quad t\in[j,j+1],$$
we may consider the sequence of mean curvature flows $\tilde x_j: M^n\times[-1,1]\to\mathbb{R}^{n+m}$, defined by $\tilde x_j(p,t)=x(\phi^{-j}(p),t+j)-\sum_{k=0}^{j-1}\mathcal R^k V$, that is,
\begin{eqnarray*}
\tilde x_j(p,t)=\left\{\begin{array}{ll}
\mathcal R^j x(p,t) & t\in[0,1],
\\
\mathcal R^{j-1} x(\phi^{-1}(p),t+1)-\mathcal R^{j-1}V & t\in[-1,0].
\end{array}\right.
\end{eqnarray*}
We may then apply the same argument to the sequence $\{\tilde x_j\}_{j=1}^\infty$ as before and use the condition (\ref{bound_condition_2}) in place of (\ref{bound_condition_1}). The conclusion follows similarly.

\section{Proof of Theorem \ref{Coro_main5}}
We show that there exists no nontrivial weakly mean convex rotator. Arguing by contradiction, we assume that $x_0:M^n\to\mathbb{R}^{n+1}$ is a weakly mean convex rotator, satisfying 
\begin{align}\label{rotator}
H=-\langle Jx_0,n\rangle
\end{align}
for some $J\in\mathfrak{so}(n+1)$. Obviously, $x_0$ cannot be closed, since otherwise it generates a closed immortal solution $x:M^n\times\mathbb{R}\to\mathbb{R}^{n+1}$, which does not exist.

Assume that $x_0$ is noncompact. Since it is properly embedded, we have $x_0(p)\to\infty$ whenever $p\to\infty$. Hence, we can find a point $p_0\in M$, such that $|x_0|$ attains its minimum at $p_0$. If $x_0(p_0)$ is the origin of $\mathbb{R}^{n+1}$, then by (\ref{rotator}) we immediately have that $H(p_0)=0$. If $x_0(p_0)$ is not the origin of $\mathbb{R}^{n+1}$, then at $p_0$ we compute
$$2x_0^\top=2Dx_0\cdot x_0=D|x_0|^2=0.$$
Hence $n(p_0)=cx_0(p_0)$ for some constant $c$. Since $J=-J^T$, by (\ref{rotator}) we have
$$H(p_0)=-\langle Jx_0(p_0),n(p_0)\rangle=-c\langle Jx_0(p_0),x_0(p_0)\rangle=0.$$
The strong maximum principle immediately implies that $H\equiv 0$ on the mean curvature flow $x$ generated by $x_0$. If $x_0$ is weakly convex, we also have that $x_0$ is totally geodesic, and hence it is a hyperplane.

\begin{rem}
We remark that any minimal surface can be view as a trivial rotator if we 
take $J=0$ in (\ref{rotator}).
\end{rem}

\end{document}